\renewcommand{\leq}{\leqslant}
\renewcommand{\tilde}[1]{\widetilde{#1}}
\renewenvironment{itemize}{
  \begin{list}{-}
    {\setlength{\parsep}{3pt}
      \setlength{\labelwidth}{24pt}
      \setlength{\itemsep}{1pt}
      \setlength{\topsep}{3pt}}}{\end{list}}
\newcommand{\bigdiamond}{\mbox{\Large$\diamond$}}
\renewcommand{\bigcirc}{\mbox{\Large$\circ$}}
\title{Symmetrized importance samplers for stochastic differential equations}
\author{Andrew Leach$^*$\thanks{$^*$Program in Applied Mathematics, University of Arizona, 617 N.~Santa Rita Ave., Tucson, AZ 85721, USA.}, Kevin K. Lin$^{*,}$\thanks{Department of Mathematics, University of Arizona,  617 N.~Santa Rita Ave., Tucson, AZ 85721, USA.}, and Matthias Morzfeld$^{*,\dagger,}$\thanks{Corresponding author.  E-mail: mmo@math.arizona.edu}}
\date{\today}
\newtheorem*{prop*}{Proposition}
\newtheorem{lemma}{Lemma}
\newcommand\ddfrac[2]{\frac{\displaystyle #1}{\displaystyle #2}}
\def\Var{\textstyle \mathop{\rm Var}}
\def \E{\textstyle \mathop{\rm E}}
\def\dt{\mbox{$\Delta t$}}
\newcommand{\tdt}{\mbox{\tiny$\Delta t$}}
\def \ep{\varepsilon}  
\def \ep{\varepsilon}
\def \O{O}  
\def \R{\mathbb{R}}
\def \dsum{\sum \limits}
\renewcommand{\phi}{\varphi} 
\DeclareMathOperator*{\argmin}{\arg\!\min}
\numberwithin{equation}{section}
\begin{document}
\maketitle

\begin{abstract}
  We study a class of importance sampling methods for stochastic differential equations (SDEs). A small-noise analysis is performed, and the results suggest that a simple symmetrization procedure can significantly improve the performance of our importance sampling schemes when the noise is not too large.  We demonstrate that this is indeed the case for a number of linear and nonlinear examples.  Potential applications, e.g., data assimilation, are discussed.
\end{abstract}



\section{Introduction}

Consider a stochastic differential equation (SDE)
\begin{equation}
	\label{eq1}
	dX_t = f(X_t)~dt + \sigma ~ dB_t~,~~X_t\in\R^D,
\end{equation}
where $f:\R^D\to\R^D$ and $B_t$ is $D$-dimensional Brownian motion.  Suppose we make noisy observations of the system at times $t=T, 2T, 3T, \cdots, JT$ 
($T>0$, fixed), obtaining a sequence of measurements $Y_j=m(X_{jT})+\eta_j,$ where $m:\R^D\to\R^d$ ($d\leq D$) is the quantity being measured (the ``observable''), $\eta_j$ are independent identically-distributed (IID) random variables modeling measurement errors and $j=1\dots J$.  What is the conditional distribution of $X_t$ for $t\in[0,JT]$ given $Y_1, Y_2, \cdots, Y_J$?  This problem of ``nonlinear filtering'' or ``data assimilation'' arises in many applications; see, e.g., \cite{chorin2013stochastic,doucet2001sequential,Bocquet2010,vanLeeuwen2009}.  
A variety of algorithms have been developed to address it, but efficient data assimilation, 
especially in high-dimensional non-gaussian problems, remains a challenge~\cite{SBM15}.

This paper concerns an approach to data assimilation known as ``particle filtering'' (see, e.g.,~\cite{doucet2001sequential} for more details) based on \emph{sampling} the conditional distributions.  We present an asymptotic analysis of certain sampling algorithms designed to improve the efficiency of particle filtering, and, based on this analysis, we propose a general way to improve their performance.  The analysis relies on taking a small-noise limit, but the algorithms do not require a small noise to operate (but may not be as efficient when the noise is not small).  We focus on {\em one} step of the filtering problem, i.e., we set $J=1$ in the above, as this is sufficient to capture the computational difficulty we wish to address.  For simplicity, we assume $\eta\sim \mathcal{N}(0,rI)$, where $r>0$ is a scalar and $I$ is the $d\times d$ identity matrix; we also assume $\sigma>0$ is a scalar.  These assumptions can be relaxed if needed.

To take one step of particle filtering, one begins by discretizing Eq.~(\ref{eq1}) using, e.g., the Euler scheme, to obtain
\begin{equation}
	\label{SDE1}
	X_{n+1} = X_{n} + \dt ~ f(X_{n}) + \sqrt{\dt} ~ \sigma \cdot \xi_{n}~,
	~~~ X_0 = x_0 \in \mathbb{R}^D,~n=0,\cdots,N-1,
\end{equation}
where $N\dt=T$, the $\xi_n$ are IID standard normal random variables.  A straightforward application of Bayes's Theorem tells us that the conditional distribution of interest satisfies
\begin{equation}
	\label{eq3}
	p(x_1,\cdots,x_N|y)\propto\exp\left(\frac1{2\sigma^2\dt}\sum_{n=0}^{N-1}\big\|x_{n+1}-x_n -
	f(x_n)\dt\big\|^2 + \frac{\|m(x_N)-y\|^2}{2r}\right)~.
\end{equation}
One then tries to design a Monte Carlo algorithm to generate discrete-time sample paths $(X_1,\cdots,X_N)$ from Eq.~(\ref{eq3}), conditioned on the observation $y$.  We refer to the distribution in Eq.~(\ref{eq3}) as the {\em target distribution}.  They are the discrete-time analogs of the conditional distributions introduced above, with $J=1$ observation.
%

Without the last term in the exponent in Eq.~(\ref{eq3}), the target distribution is just the distribution of the discretized SDE, and one can generate sample paths by carrying out the recursion in Eq.~(\ref{SDE1}).  When the last term is included, however, it is generally not feasible to sample directly from the target distribution.  A solution to this problem is {\em importance sampling:} instead of drawing samples from the target distribution, we draw sample paths $(Z_1,\cdots,Z_N)$ from an approximation $q$, usually called the ``proposal distribution''.  Any statistics we compute based on sample paths from $q$ will be biased.  We compensate for this bias by associating a weight $W^{(k)}>0$ to the $k$th sample path $(Z^{(k)}_1,\cdots,Z^{(k)}_N)$, with $\sum_kW^{(k)}=1$, so that the {\em weighted} sample paths $(Z^{(k)},W^{(k)})$ again have the correct statistics (in a sense we make precise later).


Weare and Vanden-Eijnden \cite{eijnden2012rare,eijnden2013data} proposed an algorithm for sampling distributions like Eq.~(\ref{eq3}).
They showed that their algorithm is efficient in the sense that in the limit of small dynamical and observation noise, the relative variance of the weights vanishes (see \cite{eijnden2013data} for precise definitions and statements).
The basic idea of the sampler is to look for the most likely sample path of the target distribution~(\ref{eq3}) and use this information to modify the dynamics so that samples from the proposal remain close to the target distribution. 
In this paper, by a combination of formal asymptotic analysis and numerical examples, we show that a symmetrization procedure proposed in \cite{goodman2015small} can be applied to SDEs to improve the efficiency of importance samplers.
The symmetrization and ``small noise analysis'' has also been discussed in the context of implicit sampling \cite{chorin2010implicit,morzfeld2012random}, see \cite{goodman2015small}.


While our primary motivation here is data assimilation for SDEs, our symmetrization procedure may be effective for sequential Monte Carlo sampling of more general types of systems.  As well, the class of importance sampling algorithms studied here are closely related to algorithms proposed in \cite{dupuis2004importance,dupuis2007subsolutions,dupuis2012importance,dupuis2011rare,dupuis2015escaping} and in \cite{eijnden2012rare} for sampling ``rare events'' in SDEs, though there are some significant differences between the two applications.  We plan to explore some of these connections in future work.

\paragraph{Paper organization.}  The remainder of this paper is organized as follows.
We state our main results in Section~\ref{sec:ProbStatement}.  Section~\ref{section:background} briefly reviews the linear map method and its symmetrization, as well as the small noise theory (see \cite{goodman2015small}).  We explain a new sampling method, the dynamic linear map, in Section~\ref{section:dynamic}.  We study its efficiency in the small noise regime and show how to use symmetrization to improve its efficiency in small noise problems.  Several numerical examples are provided in Section~\ref{section:numerical_examples} that illustrate our asymptotic results as well as the efficiency of our dynamic approach in multimodal problems.  The continuous time limit of the dynamic linear map is discussed in Section~\ref{section:continuous_time} and we present conclusions in Section~\ref{sec:Conclusions}.

\section{Problem statement and summary of results}
\label{sec:ProbStatement}
We now formulate the problem more precisely and summarize our key findings.  We consider a discretized SDE in the small noise regime
\begin{equation}
	\label{SDE}
	X_{n+1} = X_{n} + \dt ~ \tilde{f}(X_{n},\dt) + \sqrt{\dt} ~ \sqrt{\ep}~ \sigma
	\cdot \xi_{n}~, ~~~ X_0 = x_0 \in \mathbb{R}^D,
\end{equation}
where $\tilde{f}(x,\dt)=f(x) + \O(\dt)$ corresponds to a numerical discretization of $\dot{x}=f(x)$ (for most of this paper, we assume the Euler discretization $\tilde{f}(x,\dt)=f(x)$), and $\ep\ll 1$ is the ``small noise parameter''.
Throughout this paper we assume that the $D$-dimensional vector field $\tilde{f}$ is smooth, and that the process starts at a given initial position $x_0$ and proceeds for $N$ time steps of size $\dt$ each.  The transitions are made with independent gaussian samples $\xi_n\sim{\mathcal N}(0,I)$.  We denote the path as $x_{1:N}$, a sequence of positions $x_1,...x_N$, and its likelihood in the process with the path distribution $\rho (x_{1:N}|x_0)$.

The observation of the state at time $N\dt$ gives rise to the \emph{likelihood}
\begin{equation}
	\label{observable}
	\theta(x_N) := \exp \left(-\tfrac{1}{\ep} g(x_N) \right),
\end{equation}
where $g$ is assumed to be a smooth, nonnegative function.  
For example, for observations $y = m(x_N)+\eta$, $\eta\sim\mathcal{N}(0,\varepsilon r I)$,
we have $g(x_N) = (2r)^{-1}\,\vert\vert m(x_N)-y\vert\vert^2$.
Hereafter we will sometimes refer to $g$ as the ``log-likelihood,'' in a slight abuse of standard terminology.
%
By Bayes's Theorem, the target distribution then has the form
\begin{equation}
	\label{target}
	p(x_{1:N}|x_{0}) \propto \rho(x_{1:N}|x_{0}) \cdot
	\theta(x_N)~.
\end{equation}
Importance sampling methods generate samples using a proposal distribution $q$, and attach weights
\begin{equation}
	W^{(k)} = w(X^{(k)}_{1:N}|x_0) = p(X^{(k)}_{1:N}|x_{0}) / q(X^{(k)}_{1:N}|x_{0})
\end{equation}
to each sample, so that the weighted samples can be used to compute unbiased statistical estimates with respect to the target distribution.  To measure the efficiency of the sampling methods, we evaluate the relative variance of the weights
\begin{equation}
\label{eq:Q}
	Q := \frac{\Var\left[ W \right]}{\E \left[ W \right]^2}.
\end{equation}
Here the expected values are computed with respect to the proposal distribution $q$.
This relative variance $Q$ is connected to a standard heuristic called the ``effective sample size,'' defined by
\begin{equation}
	N_\text{eff} :=\frac{N_e}{1+Q},
\end{equation}
where $N_e$ is the number of weighted samples (see, e.g., \cite{Bergmman99,LiuChen98,doucet2001sequential}).  The effective sample size is meant to measure the size of an unweighted ensemble that is equivalent to the weighted ensemble of size $N_e$.  All else being equal, the smaller the $Q$, the more efficient the importance sampling algorithm, and if all the samples were independent, we would have $Q=0$ and $N_{\text{eff}}=N_e$.  The quantity $Q$ is convenient because it is not tied to any specific observable; recent work (see \cite{agapiou2015importance}) has also given it a more precise meaning. Other quantities that can assess effective sample sizes are discussed in \cite{MEL17}.
We note that in practice, $p$ and $q$ are only known up to a constant.  The algorithms we describe do not require knowing the normalization constants.  Likewise, $Q$ is invariant under rescaling of $p$ or $q$ by a constant.

We study two types of importance sampling methods in this paper.  The first method, called the ``linear map'' (LM), uses a gaussian proposal distribution centered at the most likely path.  The second method, called dynamic linear map (DLM), re-applies the linear map after each time step between $t=0$ and $t=N\Delta t$ given the previous moves.  Note that the linear map can be viewed as a version of implicit sampling \cite{chorin2010implicit,morzfeld2012random} applied to the path distribution of an SDE.  The dynamic linear map applies this implicit sampling step repeatedly to transition densities and is also closely linked to the continuous time control method of Weare and Vanden-Eijnden \cite{eijnden2012rare,eijnden2013data} (see also Section~\ref{section:continuous_time}).  For each method, we perform a symmetrization and exploit symmetries of the proposal distributions to increase sampling efficiency.  Symmetrization was previously studied for the LM in a more general context in \cite{goodman2015small}.  Here we adapt this procedure to problems involving SDE and to the dynamic linear map.  Following the approach taken in \cite{goodman2015small}, we show that under suitable assumptions (see Section~\ref{section:dynamic}), the relative variances of the various methods are as follows:
\begin{center}
	\begin{tabular}{c|c l}
		Method & $Q(\ep)$ scaling & \\\hline
   		\textbf{Linear Map (LM)} & $ \O(\ep)$ &\\
    	\textbf{Symmetrized LM} & $ \O(\ep^2)$ &\\
    	\textbf{Dynamic LM (DLM)} & $ \O(\ep)$ & \\
    	\textbf{Symmetrized DLM} & $ \O(\ep^2)$ & \\
	\end{tabular}
\end{center}
We also present examples showing that the leading coefficient of the DLM can be smaller than that of LM, suggesting that DLM may be more effective in some situations (see Section~\ref{section:numerical_examples}).  We discuss the continuous time limit of LM and DLM for scalar SDE, and calculate the leading coefficient of $Q(\ep)$ in an asymptotic expansion in $\ep$.  In doing so, we show that, under additional assumptions, the sampling method discussed in \cite{eijnden2012rare} is recovered in the $\dt\to 0$ limit of the DLM (see Section~\ref{section:continuous_time}).

\paragraph{Notes.}
\begin{enumerate}

\item The $\ep$-expansions we will consider are formally justified as the relevant quantities, e.g., relative weight variance, are gaussian integrals.

\item 
The insertion of the small noise parameter $\ep$ into the problem is mainly to enable asymptotic analysis.  In specific problems, there is not always an identifiable small parameter, and in any case our methods do not require a small parameter to operate.

\end{enumerate}

\section{Background}\label{section:background}
We simplify notation and write $x:=x_{1:N}$, and $F(x):=F(x_{1:N}|x_0)$,
and consider the small noise target distribution defined in \eqref{target} which can be written as $p(x)\propto \exp(-F(x)/\ep)$, where
\begin{equation}
\label{eq:F}
	F(x) =
	\frac{\dt}{2\sigma^2}\sum_{n=0}^{N-1}\Big\|\frac{x_{n+1}-x_n}{\dt} -
	\tilde{f}(x_n,\dt)\Big\|^2 + g(x_N)~,
\end{equation}
for $g$, a scalar function as in \eqref{observable}.
If we assume that $F$ has a unique, nondegenerate minimum, and let
\begin{equation}
  \label{eq:optimal-path}
  \phi = \argmin_{x \in \R^{D \cdot N}}  F(x),
\end{equation} 
i.e., $\phi$ is the optimal path with prescribed initial condition $x_0$, we can employ Laplace asymptotics to expand the target distribution around $\phi$.  (See, e.g., \cite{ostrovskii_exact_2003} for a general formulation of Laplace asymptotics.)  After a change of variables
\begin{equation}
\label{eq:ChangeOfVars}
	z = \ep^{-1/2} \cdot (x-\phi)
\end{equation}
the expansion is
\begin{equation}
	F(z) = F(\phi) + z^T H z/2+ \ep^{1/2} C_{3}(z) + \ep C_{4}(z) + \O(\ep^{3/2}),
\end{equation}
where $H$ is the Hessian evaluated at $\phi$, $C_k$ are the higher order terms in the Taylor series.  Here and below, we use the shorthand $F(z):= F(\phi + \ep^{1/2} z)$, and similarly write $w(z)$ for $w(\phi + \ep^{1/2} z)$ etc.  Note that while we will continue to refer to $z:=\{z_{1},\dots,z_n\}$ as a ``path'' after the change of coordinates, $x=\phi+\sqrt{\ep} z$ is the actual solution of Eq.~(\ref{SDE}).

The small noise analysis of LM, and other methods to follow will make frequent use of this expansion, as well as the ``variance lemma'' (see \cite{goodman2015small}).
\begin{lemma}
	{\bf{(Variance Lemma)}} For a function $u(z,\ep)$ that can be expanded in $\ep$ at least to the terms
	\begin{equation}
		u(z) = 1 + \ep^{r} u_1(z) + \ep^{2r} u_2(z) + \O(\ep^{3r})
	\end{equation}
	the relative variance of $u$ with respect to a probability density $q$ is
	\begin{equation}
		Q =  \ep^{2r}\Var_{q} \left[ u_1(z) \right] + \O(\ep^{3r})
	\end{equation}
\end{lemma}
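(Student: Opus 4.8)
The plan is to compute the mean and variance of $u(z)$ directly against the density $q$, using the stated expansion $u(z) = 1 + \ep^r u_1(z) + \ep^{2r} u_2(z) + \O(\ep^{3r})$. First I would write $\E_q[u] = 1 + \ep^r \E_q[u_1] + \ep^{2r}\E_q[u_2] + \O(\ep^{3r})$, so that $\E_q[u]^2 = 1 + 2\ep^r \E_q[u_1] + \ep^{2r}\big(\E_q[u_1]^2 + 2\E_q[u_2]\big) + \O(\ep^{3r})$. Next I would expand $u(z)^2 = 1 + 2\ep^r u_1 + \ep^{2r}\big(u_1^2 + 2 u_2\big) + \O(\ep^{3r})$ and take the $q$-expectation to get $\E_q[u^2] = 1 + 2\ep^r \E_q[u_1] + \ep^{2r}\big(\E_q[u_1^2] + 2\E_q[u_2]\big) + \O(\ep^{3r})$.

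Then $\Var_q[u] = \E_q[u^2] - \E_q[u]^2$; the constant terms and the $\ep^r$ terms cancel exactly, as do the $2\E_q[u_2]$ contributions at order $\ep^{2r}$, leaving $\Var_q[u] = \ep^{2r}\big(\E_q[u_1^2] - \E_q[u_1]^2\big) + \O(\ep^{3r}) = \ep^{2r}\Var_q[u_1] + \O(\ep^{3r})$. Finally, since $Q = \Var_q[u]/\E_q[u]^2$ and $\E_q[u]^2 = 1 + \O(\ep^r)$, dividing gives $Q = \ep^{2r}\Var_q[u_1] + \O(\ep^{3r})$, which is the claim.

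The only genuine subtlety — rather than an obstacle — is bookkeeping of the error terms: one must check that the $\O(\ep^{3r})$ remainders in $u$ and $u^2$ really do contribute only at order $\ep^{3r}$ to the mean and variance, i.e. that the relevant moments against $q$ are finite and that cross terms like $\ep^r u_1 \cdot \O(\ep^{3r})$ are absorbed. As noted in the paper's remarks, in the applications $q$ is Gaussian and the $u_i$ are polynomials, so all these moments are finite Gaussian integrals and the manipulations are rigorous; the formal expansion is all that is needed here.
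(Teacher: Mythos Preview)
Your proof is correct. The paper itself does not supply a proof of the Variance Lemma; it merely states the result and refers the reader to \cite{goodman2015small}. Your direct computation---expanding $\E_q[u]$, $\E_q[u^2]$, and then $\Var_q[u]/\E_q[u]^2$ term by term in $\ep^r$---is exactly the natural argument and is what one would expect the cited reference to contain. Your closing remark about the $\O(\ep^{3r})$ bookkeeping being justified because the relevant integrals are Gaussian with polynomial integrands matches the paper's own note that the $\ep$-expansions are ``formally justified as the relevant quantities \ldots\ are gaussian integrals.''
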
	

\subsection{Linear map}

\begin{algorithm}[tb]
  		\nl Calculate $\phi$ and $H$ starting from $x_{0}$\;
	\For{$m = 1$ to $M$}{ 
  		\nl Sample $X \sim \mathcal{N}(\phi, \ep H^{-1})$\;
		\nl Calculate $W = p(X)/q(X)$\;
	}
	\nl Return $M$ weighted samples $X,W$\;
	\caption{Linear Map}\label{figure:LM_alg}
\end{algorithm}

The proposal distribution of the linear map (LM) sampling method, summarized in Algorithm~\ref{figure:LM_alg}, is gaussian and proportional to
\begin{equation}
	\label{qlmz}
	q(z) \propto \exp \left( - z^T H z/2 \right).
\end{equation}
The weights are the ratio of target and proposal distribution,
and can be expanded as
\begin{equation}
	w(z) = 1 - \ep^{1/2} C_{3}(z) + \O(\ep).
\end{equation}
Using the variance lemma we thus find that
\begin{equation}
		Q = \ep\Var_{q} \left[ C_{3}(z) \right] + \O(\ep^{3/2}),
\end{equation}
i.e., the relative variance of the weights is linear in $\ep$
(see \cite{goodman2015small} for more details).

\subsection{Symmetrized linear map}

It is shown in \cite{goodman2015small} that the linear map can be ``symmetrized" to improve the 
scaling of $Q$ from linear to quadratic in $\ep$.
This stems from the observation that the leading order term in the weight is an odd function with respect to the random variable $z$, whose probability distribution function is even.
The symmetrized sampler uses a proposal distribution which reweights equally likely samples from the gaussian distribution of the linear map such that the resulting weights have even symmetry.  
The odd leading order terms in the weight expansions then cancel, which results in a quadratic scaling of $Q$ in $\ep$.

Specifically, the symmetrized linear map 
draws a sample $z$ from the proposal distribution $q$.
It returns $z$ with probability $w^+/(w^- + w^+)$,
and $-z$  with probability $w^-/(w^- + w^+)$,
where
\begin{equation}
	w^+ = \frac{p(-z)}{q(z)} ~~\mbox{and}~~ w^- = \frac{p(z)}{q(z)}.
\end{equation}
Samples generated in this way have a non-symmetric distribution,
but even weights:
\begin{equation}
\label{eq:symmProp}
	q_{s}(z) = q(z) \frac{2 w^+}{w^- + w^+}, \quad w_{s}(z) = \frac{w^- + w^+}{2}.
\end{equation}
The Taylor expansion of the symmetrized weight is
\begin{equation}
	w_{s}(z) = 1 + \ep \left( \tfrac{1}{2}C_{3}(z)^{2} - C_{4}(z) \right) + \O(\ep^{2}),
\end{equation}
which, together with the variance lemma shows that 
\begin{equation}
	Q_{s} = \ep^2 \Var_{q} \left[ \frac{1}{2}C_{3}(z)^{2} - C_{4}(z) \right] + \O(\ep^4).
\end{equation}
The symmetrization therefore improves the linear scaling of $Q$ in $\ep$ of LM, 
to a quadratic scaling of $Q$ for SLM
(see \cite{goodman2015small} for more details).

\section{Dynamic linear map and its symmetrization}\label{section:dynamic}


\subsection{A multimodal example}
\label{sec:A multimodal example}

The linear map can be efficient when the hypotheses underlying its derivation are satisfied, i.e., when the pathspace distribution is unimodal and a gaussian approximation is appropriate.  However, when there are multiple modes, LM can become inefficient.  To see how this might happen, consider the simple random walk
\begin{equation}
  X_{n+1} = X_n + \sqrt{\dt}~\sqrt{\ep}~\xi_n
\end{equation}
i.e., $X_{n} = X_0 + \sqrt{\dt}~\sqrt{\ep}~W_n$ where $W_n$ is standard Wiener process.  Suppose we have a bimodal likelihood function $e^{-g(x)/\ep}$ whose graph is as shown in Figure~\ref{fig:cond-bm}; this type of situation can arise when multiple states can give the same measurement, so that observations may have ambiguous interpretation.  In this case, the high probability paths will be those that reach the vicinity of $x=\pm1$ at $t=1$; effectively, the high probability paths are sample paths of Brownian motion, conditioned to be near $x=\pm1$ at $t=1$.  The probability of this occurring by chance is exponentially small as $\ep\to0$, and direct sampling is unlikely to ever produce such a path.

\begin{figure}
  \begin{center}
    \resizebox{3.5in}{!}{\includegraphics{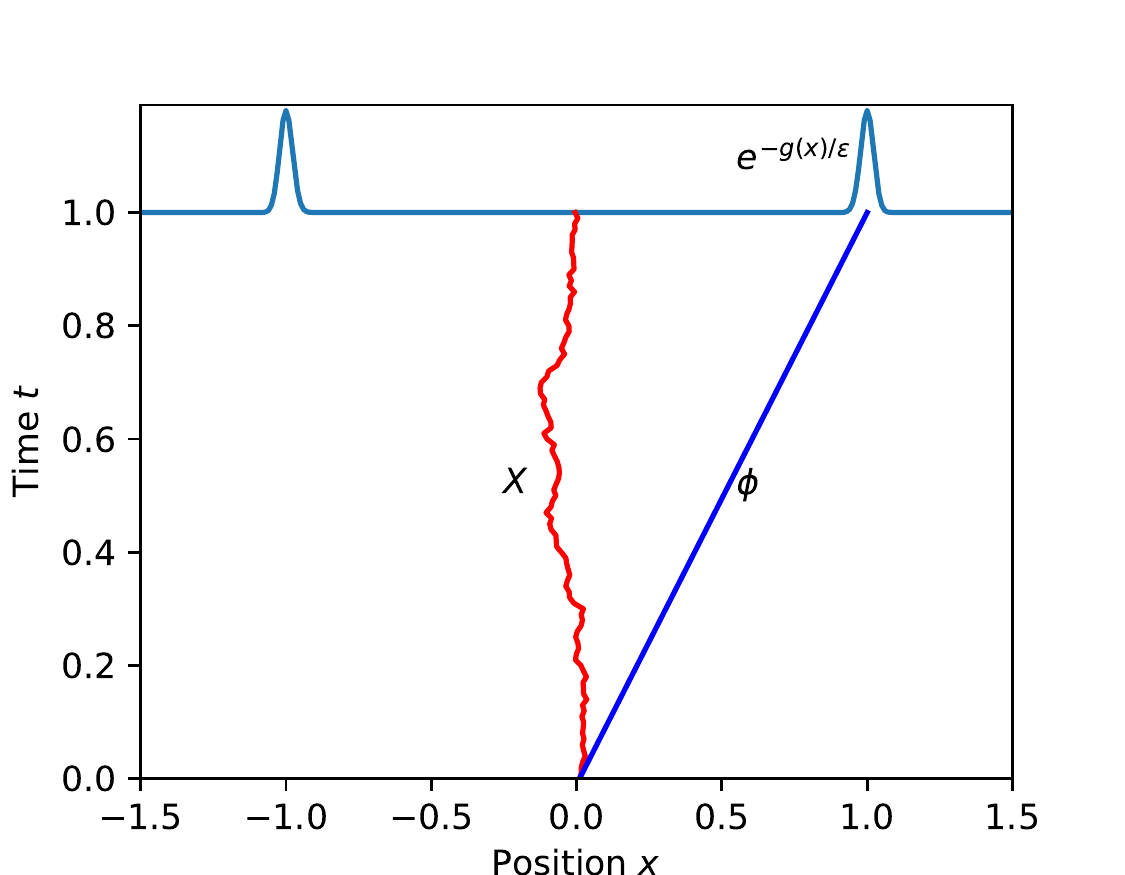}}
  \end{center}
  \caption{Brownian motion with bimodal likelihood.  Here, the initial condition is $X_0=0.01$, and we use $\ep=0.1$~.  Shown are a sample path $X$ and the optimal path $\phi$ starting from $X_0$.}
  \label{fig:cond-bm}
\end{figure}

A straightforward calculation shows that the optimal path $\phi$ approaches a straight line in the $xt$-plane as $\ep\to0$, going to the right bump if $X_0>0$, to the left if $X_0<0$ (and undefined if $X_0=0$).  With a bimodal likelihood function, the target distribution $p(x)$ is bimodal as well.  If the initial condition is sufficiently to the right of $x=0$, one of the two modes will dominate, and LM can be expected to be effective.  As $X_0$ moves closer to $x=0$, however, the other mode will begin to make a greater contribution; at $X_0=0$, the two modes carry exactly the same weight.  But LM will \emph{always} pick the mode on the right when $X_0>0$, no matter how close $X_0$ is to $x=0$.  So LM will produce essentially no sample paths going to the left, leading to a large weight variance.  See Section~\ref{section:numerical_examples} for detailed numerical results.

This is a well-known problem with importance sampling algorithms.  Similar issues arise in rare event simulation, and a standard solution is to dynamically recompute the optimal path.  See, e.g., the discussion of Siegmund's algorithm in \cite{asmussen_stochastic_2007}.  In our context, this leads to an algorithm we call the dynamic linear map, which is similar to the algorithms proposed in \cite{eijnden2012rare,dupuis2007subsolutions}.  We will also discuss symmetrization in this context.

\subsection{Dynamic linear map}
\label{section:Dynamic linear map}


Roughly speaking, the dynamic linear map (DLM) consists of computing the optimal path $\phi$ starting from the current state $X_n$, taking \emph{one} step (so that $X_{n+1}=\phi_{n+1}$), then repeating.  See Algorithm~\ref{figure:DLM_alg} for details.  The DLM thus requires redoing LM \emph{at every step}, and is therefore 
more expensive.\footnote{Suppose each cost function evaluation requires CPU time $\propto N$, the number of steps, and each optimization requires $k$ function evaluations.  Then all else being equal, LM has running time $O(kN)$ and DLM $O(kN^2)$.}  However, it can avoid some of the issues arising from multi-modal target distributions. One can see this heuristically in the above example (Section~\ref{sec:A multimodal example}): suppose we start with $X_0$ slightly to the right of $x=0$, so that the optimal path $\phi$ goes to the right bump.  After a few steps, we may end up in a state $X_n$ closer to the left bump.  At this point, the DLM would start steering the sample path towards the left bump.  Unlike LM, repeated sampling using DLM would yield sample paths that end at both the left and the right bumps (see Section~\ref{sec: Brownian motion and linear SDE}).

To make use of DLM, we need an expression for the associated weights.  This, in turn, requires an expression for the proposal distribution $q$ associated with DLM, which one can derive by first noting that in general, transition densities are marginals of the pathspace distribution:
\begin{align*}
	\rho(x_{n+1}|x_n)&= \int \rho(x_{n+1:N}|x_n)~dx_{n+2:N} .
\end{align*}
(Here we abuse notation slightly and use $p$ and $q$ to denote both pathspace distributions as well as their marginals.)  The DLM transition density arises from making a gaussian approximation of the target distribution at each step, then taking its marginal.  This leads to
\begin{align}
\label{eq:DLMProposal}
	q(x_{n+1}|x_n)&= \int q(x_{n+1:N}|x_n)~dx_{n+2:N}\\
	&\propto \exp \left (- (x-\phi)_{{\mbox{\tiny$n+1$}}}^T \Sigma_{n+1}^{-1} (x-\phi)_{{\mbox{\tiny$n+1$}}}/(2 \dt)\right).\nonumber
\end{align}
Here $\phi$ is the optimal path from $x_n$ to $x_N$
and we omit its dependence on $x_n$ for readability of the equations;
we also remind the reader that $x=x_n,\dots,x_N$ is a path.
We denote the Hessian of $F(x)$ evaluated at the optimal path $\phi$ by $H$.
We view a path from $x_n$ to $x_{n+k}$ as a point in $\R^{kD}$, arranged in $k$ blocks of $D$ entries.  Accordingly, the matrix $H$ can be viewed as an element of $\R^{(N-n)D\times(N-n)D}$ 
and can be subdivided into $(N-n)\times(N-n)$ blocks of dimension $D\times D$ each.  The matrix $\Sigma_{n+1}$ in Eq.~(\ref{eq:DLMProposal}) is $(H^{-1})_{1,1}/\dt$, the first block of the inverse of the Hessian $H$ (after rescaling).

\begin{algorithm}[tb]
	\For{$m = 1$ to $M$}{
		\For{$n = 0$ to $N-1$}{
			\nl Calculate $\phi$ and $H$ starting from $X_{n}$\;
			\nl Calculate $\Sigma_{n+1} = (H^{-1})_{\mbox{\tiny{1,1}}}/\dt$\;
	  		\nl Sample $X_{n+1} \sim \mathcal{N}(\phi_{\mbox{\tiny{n+1}}}, ~\dt ~\ep ~ \Sigma_{n+1})$\;
			\nl Calculate $W_{\mbox{\tiny{n}}} = p(X_{\mbox{\tiny{n+1}}}|X_{\mbox{\tiny{n}}})/q(X_{\mbox{\tiny{n+1}}}|X_{\mbox{\tiny{n}}})$\;
		}
	}
	\nl Calculate $W = W_{\mbox{\tiny{N-1}}} \cdot ... \cdot W_{\mbox{\tiny{0}}}$\;
	\nl Return $M$ weighted samples $X,W$\;
	\caption{Dynamic Linear Map}\label{figure:DLM_alg}
\end{algorithm}
In Algorithm~\ref{figure:DLM_alg}, going from step $n$ to $n+1$ requires optimizing over the $(N-n)D$ remaining variables in the path.  This is done independently at every step and for every sample path.  The weights for the proposal distribution of DLM can be calculated as described in Algorithm~\ref{figure:DLM_alg}, or as the product of the incremental weights
\begin{equation}
	w = \prod_{n=0}^{N-1} w_n,\quad 
	w_n\propto \frac{p(x_{n+1}\vert x_{n})}
	{q(x_{n+1}\vert x_n)}.
\end{equation}

\medskip
\noindent
\emph{Relation to Hamilton-Jacobi equation and regularity of ``value functions''.}  In the definitions above, it is assumed that $q(x_{n+1}|x_n)$ is well-defined for all $(x_n,x_{n+1})$.  This is actually not always the case.  To see this, consider again the example from Section~\ref{sec:A multimodal example}.  If $x_n=0$ at some $n$, there are two optimal paths pointing in opposite directions.  At this point, because there is not a single optimal path, $q(x_{n+1}|x_n)$ is undefined.  This behavior is actually rather common, and not at all confined to the Brownian motion example.  It is closely connected with regularity of solutions of a partial differential equation of Hamilton-Jacobi (HJ) type.  As we do not make use of the theory of HJ equations in this paper, we do not go into details here.  Instead, we provide a brief summary below, and refer interested readers to, e.g., \cite{eijnden2013data} or \cite{dupuis2004importance,dupuis2007subsolutions,dupuis2012importance,dupuis2011rare}, for more information.


In the DLM method, the optimal path minimizes a version of the function $F$ in Eq.~(\ref{eq:F}), but starting with state $x_n$ at time $n$ rather than always at time 0.  In the limit as $\dt\to0$, the \emph{value function} $u(x,t)$ achieved with initial condition $x_n=x$ at step $n\dt=t$ solves a HJ equation of the form $\partial_tu = H(x,Du)$, with Hamiltonian $H(x,p) = \frac{\sigma^2}2|p|^2 + p\cdot f(x)$; this is the Legendre transformation of the Freidlin-Wentzell Lagrangian $L(x,v) = \frac1{2\sigma^2}|v-f(x)|^2$~\cite{freidlin1984random}.  For the HJ equation to be well-posed, one prescribes the \emph{final condition} that $u(x,T)=g(x),$ where $g$ is the likelihood in Eq.~(\ref{observable}) and $T>0$.  The HJ equation is then solved backwards in time.  The time derivative $\dot\phi$ of the optimal path starting at position $x$ and time $t$ is given by the gradient of $u(x,t)$ where it is differentiable.  At locations $(x,t)$ where there are multiple optimal paths, the value function $u(x,t)$ is generally continuous but not differentiable.  At such \emph{singular points} $x$, $q(x_{n+1},x)$ has jump disconinuities (as $x$ varies) and is therefore undefined.

Though very much relevant to the efficacy of the type of methods discussed in this paper, the analysis of singularities of HJ equations can be highly nontrivial.  As our main goal is to assess whether some version of the symmetrization procedure proposed in \cite{goodman2015small} can be extended to SDEs, we have opted to focus on the simplest possible setting, leaving more general analysis to future work.  \emph{For the remainder of the paper, we make the following {\bf standing assumption:}}
\begin{displaymath}
  \mbox{$q(x_{n+1}|x_n)$ is defined everywhere, and is as smooth as needed.}
\end{displaymath}
The analytical results described below should therefore be interpreted as a \emph{best-case scenario.}  We also note that while the numerical algorithm is unlikely to produce an $x_n$ \emph{exactly} in the set of singular points in actual practice, the presence of singularities does mean that the performance of the algorithm may be worse than predicted by our analysis.  We have therefore designed our numerical examples to test the extent to which the algorithms behave as predicted even when $q(x_{n+1}|x_n)$ is not differentiable everywhere.

\subsection{Small-noise analysis}


To find the scaling of the relative variance of the weights of DLM with the small noise parameter $\ep$, we apply the same change of variables as in Eq.~\eqref{eq:ChangeOfVars} to each transition density and expand the incremental weights $w_n$ as
\begin{equation}
\label{eq:weightsDLM}
w_n ~~=~~ w(z_{n+1}|z_n) ~~=~~ 1 + \ep^{1/2} \cdot w_{1,n}(z_{n+1}|z_n) + \ep \cdot w_{2,n}(z_{n+1}|z_n) + \O(\ep^{3/2}),
\end{equation}
where
\begin{align}
	w_{1,n}(z_{n+1}|z_n) &= \ddfrac{\int C_{3}(z) \exp \left (- z^T H z/2 \right )~dz_{n+2:N}}{\int \exp \left (- z^T H z/2 \right )~dz_{n+2:N}} \\[1ex]
	w_{2,n}(z_{n+1}|z_n) &= \ddfrac{\int (C_{3}(z)^2/2 - C_{4}(z)) \exp \left (- z^T H z/2 \right )~dz_{n+2:N}}{\int \exp \left (- z^T H z/2 \right )~dz_{n+2:N}} \nonumber \\
	& ~~~- \int (C_{3}(z)^2/2 - C_{4}(z)) \exp \left (- z^T H z/2 \right )~dz_{n+1:N},
\end{align}
%
noting that Eq.~(\ref{eq:weightsDLM}) relies strongly on our standing assumption that $q(x_{n+1}|x_n)$ is differentiable.  Since the weight of a sample is the product of the incremental weights, we have
\begin{align*}
  w(z) &= 1 +\ep^{1/2} \cdot w_{1} + \ep \cdot w_{2} + \O(\ep^{3/2}),
\end{align*}
where
\begin{equation}	
  w_{1} = \dsum_{n=0}^{N-1} w_{1,n}, \quad
  w_{2} = \dsum_{n=0}^{N-1} w_{2,n} + \dsum_{n=0}^{N-1} \dsum_{m=0}^{N-1} w_{1,n} \cdot w_{1,m}.
\end{equation}
The scaling of $Q$ in $\ep$ now follows from the variance lemma:
\begin{equation}
	Q^{\ep} = \ep \cdot \Var_{q} \left[ w_1 \right] + \O(\ep^2).
\end{equation}
Thus, the relative variance of DLM scales linearly in $\ep$, the same asymptotic scaling as LM.
However, we will show in numerical examples below that  
the dynamic approach can be more effective in practice than LM, 
especially when the target distribution has multiple modes.

\subsection{Symmetrization}\label{section:symmetrization}

The leading order term in the weight for DLM has an odd symmetry, just like the LM, and a symmetrization procedure can be applied to DLM to improve the scaling of $Q$ in $\ep$.  The reason is that, at each time step, $X_{n+1}$ is generated by a composition of the previous state $X_n$ and a new gaussian sample $\xi_n$.  While this procedure leads to a proposal distribution that is not necessarily even, the paths are constructed incrementally from gaussian samples which are even.

More specifically, 
the recursive composition forms a map $h$ from the $N\cdot D$ dimensional gaussian to the path $X = h(\ep^{1/2} \xi)$,
and for every sampled path $X^{+} = h(\ep^{1/2} \xi)$,
there is a path $X^{-} = h(-\ep^{1/2}\xi)$ which is equally likely.
Following the algorithm described in Algorithm~\ref{figure:S_alg}, we sample $X^{+}$ with probability $W^+/(W^+ +W^-)$, and $X^{-}$ with probability $W^-/(W^+ +W^-)$, the resulting proposal is a ``symmetrized'' distribution with even weights (see Eq.~\eqref{eq:symmProp}).
\begin{algorithm}[tb]
	\For{$m = 1$ to $M$}{
		\nl Sample $\xi \sim \mathcal{N}(0,I)$\;
		\nl Calculate $X^{+} = h(\ep^{-1/2} \xi)$ and $X^{-} = h(-\ep^{-1/2} \xi)$\;
		\nl Calculate $W^{+} = p(X^+)/q(X^+)$ and $W^{-} = p(X^-)/q(X^-)$\;
		\nl Sample $X = X^{+}$ with prob. $\frac{W^+}{W^+ + W^-}$ and $X = X^{-}$ with prob. $\frac{W^-}{W^+ + W^-}$\;
		\nl Calculate $W = \frac{W^+ + W^-}{2}$\;
	}
	\nl Return $M$ weighted samples $X,W$\;
	\caption{Symmetrization}\label{figure:S_alg}
\end{algorithm}

The symmetrized weights can be written in terms of the map as
\begin{equation}
	w_s(h(\ep^{1/2}\xi)) = \frac{w(h(\ep^{1/2}\xi)) + w(h(-\ep^{1/2}\xi))}{2}.
\end{equation}
Recall the expansion of the weights in \eqref{eq:weightsDLM}, and note that
\begin{align*}
		z &= \ep^{-1/2}(h(\ep^{1/2}\xi)-h(0)),
\end{align*}
since the most likely path $\phi$ can be written in terms of the map
as $\phi  = h(0)$.

If $\phi$ is unique (at each time step), $h$ 
can be expanded around the most likely path as
\begin{align}
	\label{hex1}
	h(\ep^{1/2} \xi) &= \phi +  \ep^{1/2} (Dh)(0)\cdot \xi + \O(\ep),\\
	\label{hex2}
	h(-\ep^{1/2} \xi) &= \phi -  \ep^{1/2} (Dh)(0)\cdot \xi + \O(\ep).
\end{align}
We thus have that
\begin{align}
	w(h(\ep^{1/2}\xi)) &= 1 + \ep^{1/2} w_1(\ep^{1/2} (Dh)(0)\cdot \xi,\phi) + \ep w_2(\ep^{1/2} (Dh)(0)\cdot \xi,\phi) + \O(\ep^{3/2}) \\
	w(h(-\ep^{1/2}\xi)) &= 1 - \ep^{1/2} w_1(\ep^{1/2} (Dh)(0)\cdot \xi,\phi) + \ep w_2(\ep^{1/2} (Dh)(0)\cdot \xi,\phi) + \O(\ep^{3/2})
\end{align}
which results in the cancellation of the leading order term in $\ep$ of the symmetrized weight
\begin{equation}
	w_s(h(\ep^{1/2}\xi)) = 1 + \ep w_2(\ep^{1/2} (Dh)(0)\cdot \xi,\phi) + \O(\ep^{3/2})
\end{equation}
Applying the variance lemma completes the proof for the quadratic scaling of $Q_s$ in $\ep$

\begin{equation}
	Q_s = \ep^2 \cdot \Var_{q_s}[w_2] + \O(\ep^4).
\end{equation}

\section{Numerical examples}\label{section:numerical_examples}


We now examine a number of concrete examples, both to illustrate the scaling of the proposed algorithms and to test their limitations.  The source code for all examples in this section can be found at {\tt\href{https://github.com/AndrewLeach/SDE_Importance_Sampling}{https://github.com/AndrewLeach/SDE\_Importance\_Sampling}}~.

\subsection{Examples with linear SDE}
\label{sec: Brownian motion and linear SDE}

We begin with the Brownian motion example from Section~\ref{sec:A multimodal example}:
\begin{equation}
\label{eq:BM}
X_{n+1} = X_n + \sqrt{\dt}~\sqrt{\ep}~\xi_n~,
\end{equation}
with initial condition $X_0 = x_0$ and with likelihood $\theta=e^{-g(X_N)/\ep}$ for two different choices for $g$. 
We first consider the case of a unimodal target distribution
for which the assumptions made during the small noise analysis are satisfied.
We then violate the assumption of a unique optimal path
to indicate limitations of DLM and our small noise analysis.
For the examples below, the time step is $\dt = 10^{-2}$.  The observation is collected at step 
$N = 100$ (i.e., $T=1$).  Computing the optimal paths is straightforward to do analytically
and we use the analytic formulas in our implementation of the various samplers.  

\paragraph{Brownian motion with unimodal likelihood.} 
We first consider a likelihood defined by
\begin{displaymath}
g(x) = \frac{x^4}{24} + \frac{x^3}{6} + \frac{x^2}{2}.
\end{displaymath}
The likelihood is asymmetric in $x$
and leads to a non-gaussian and unimodal target distribution.  
In this example, the assumptions made in our small noise analysis are satisfied.

We apply LM, SLM, DLM, and SDLM to sample the target distribution 
over a wide range of $\ep$, and compute the relative variance $Q$ for each of these methods.  
For each $\ep$ and method (LM, SLM, DLM and SDLM), we draw $1200$ samples .
The results are shown in Figure~\ref{Q_BM_A}.
\begin{figure}[tb]
	\centering
	\includegraphics[width = .5 \textwidth]{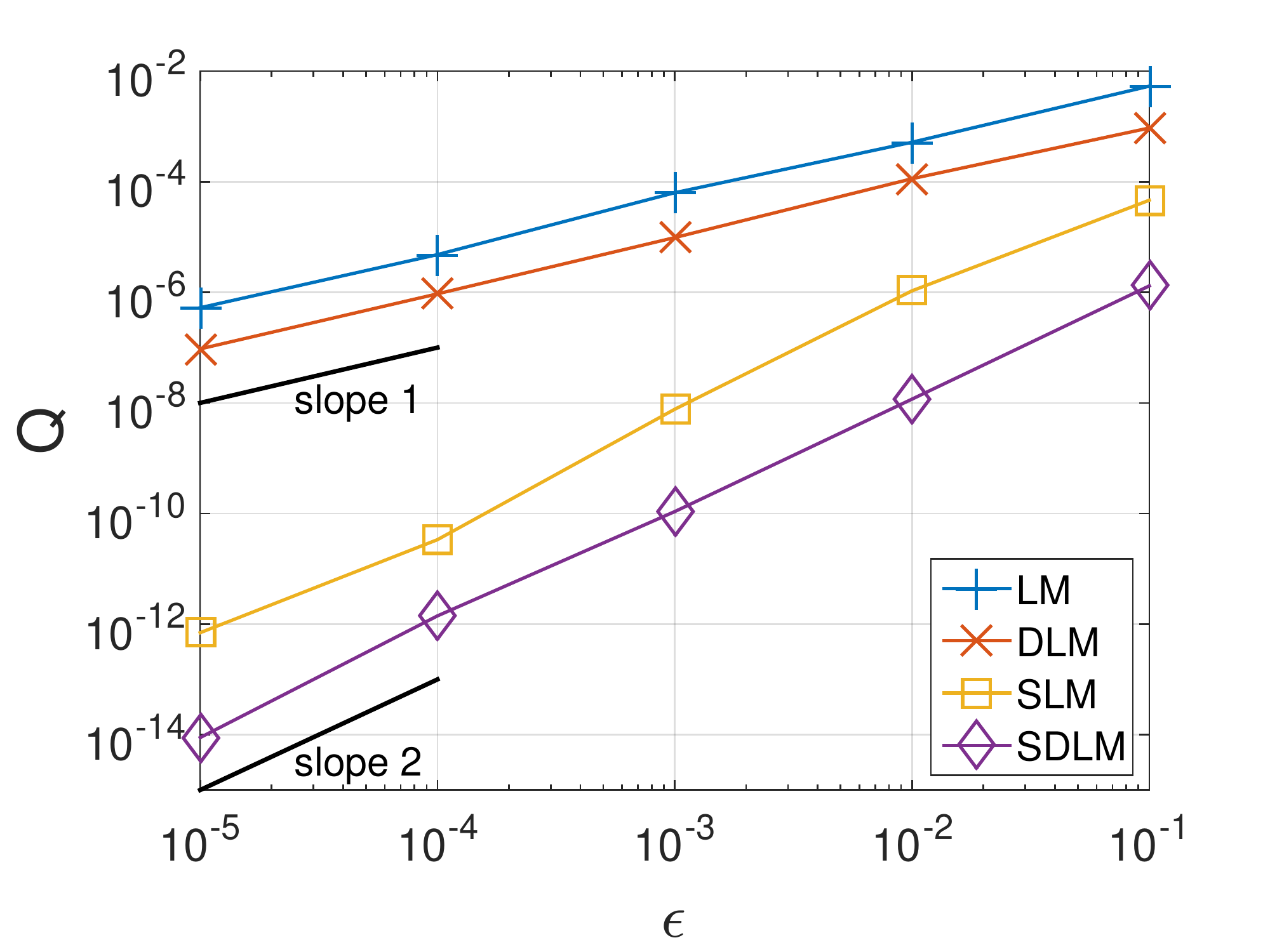}
	\caption{Brownian motion with asymmetric unimodal likelihood.  The scaling of $Q$ in $\ep$ for LM, SLM, DLM and SDLM are plotted.}
	\label{Q_BM_A}	
\end{figure}
As can be seen, the results show the predicted scalings for $Q$ for a wide range of $\ep$ for all four methods: both LM and DLM are $O(\ep)$, while SLM and SDLM are both $O(\ep^2)$.  Perhaps this is no surprise, as all assumptions that lead to the small noise theory are valid in this example.  We also see that the dynamic methods (DLM and SDLM) have smaller relative variance $Q$ at each value of $\ep$, though they also cost more per sample.


\paragraph{Brownian motion with bimodal likelihood.}
Next, we examine
\begin{displaymath}
g(x) = 100 \cdot \left(\frac{x^4}{4} - \frac{x^2}{2}\right).
\end{displaymath}
As explained in Section~\ref{sec:A multimodal example}, this leads to a bimodal target distribution.
%
%
We fix $\ep=10^{-1}$, and leave all other parameters as above.  We apply LM and DLM to compute the final-time distribution $p(X_N|X_0)$, using $1.2\times10^4$ (weighted) samples.  The results are shown in Figure~\ref{BM_WHist}, along with the target distribution $\propto e^{-(g(x) + x^2/2)/\ep}$.
\begin{figure}[tb]
  \begin{center}
    \begin{tabular}{cc}
      \resizebox{3in}{!}{\includegraphics{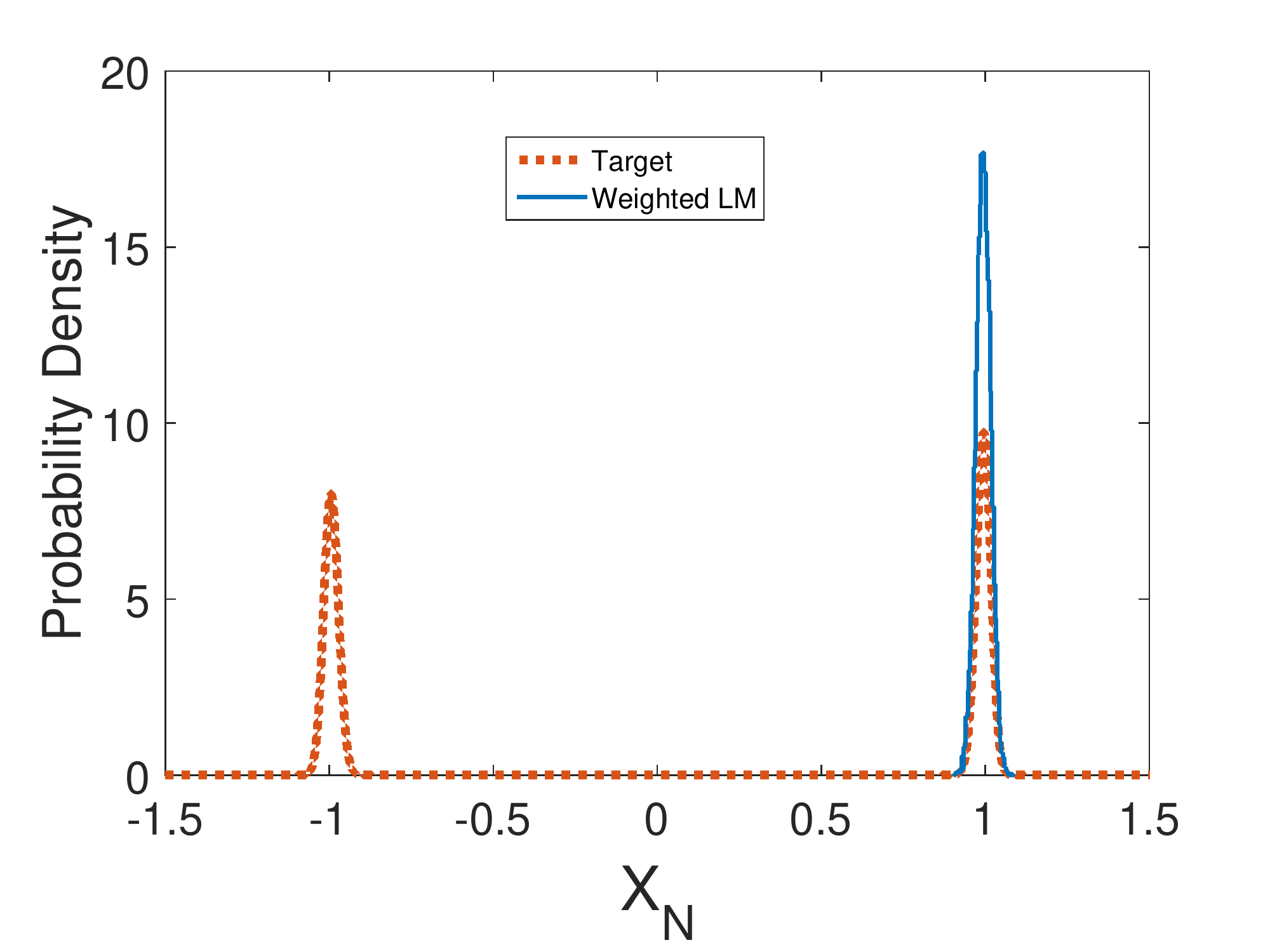}}&
      \resizebox{3in}{!}{\includegraphics{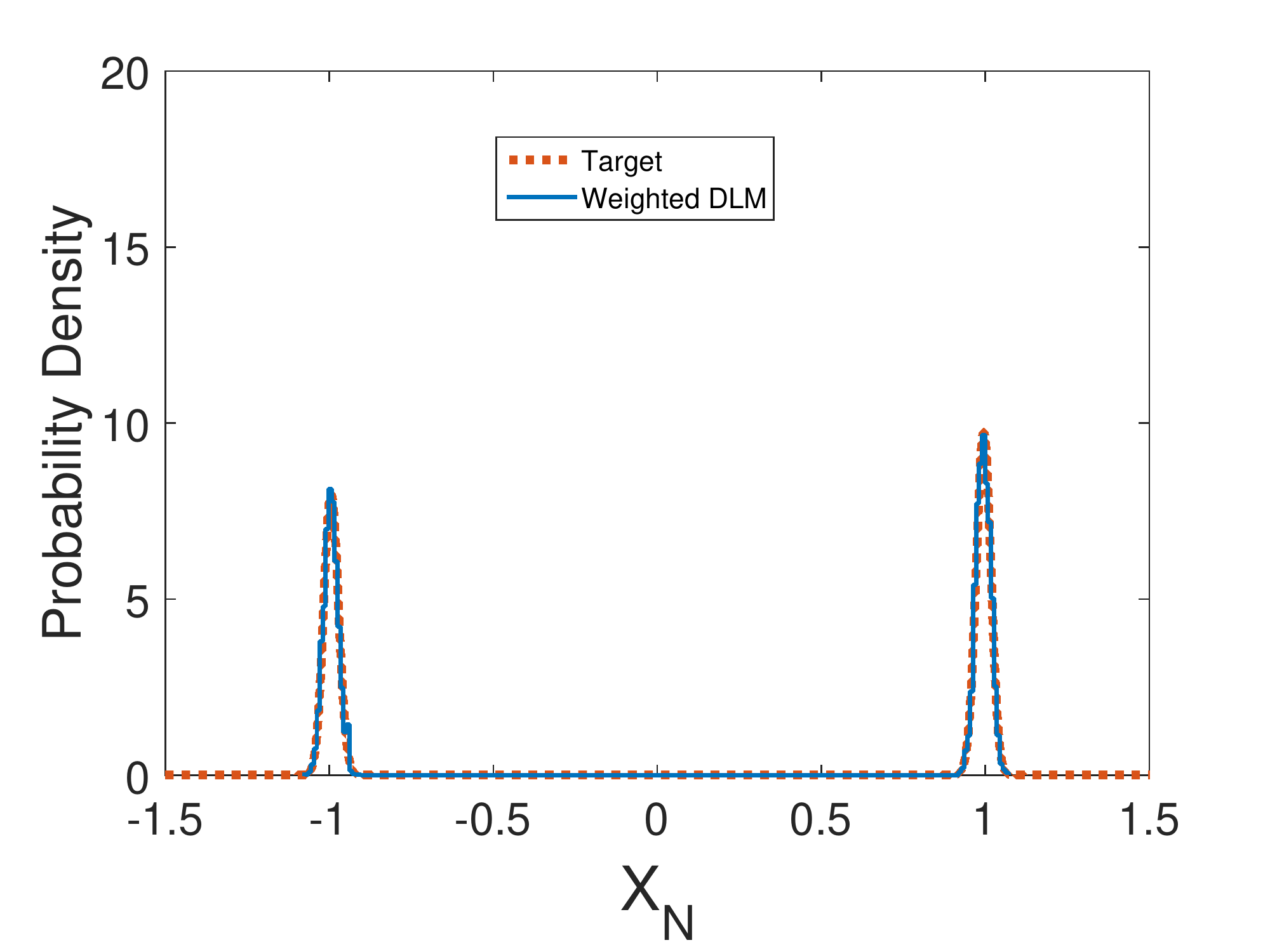}}\\
      (a) LM & (b) DLM\\
    \end{tabular}
  \end{center}

  \caption{Final-time distribution for Brownian motion with bimodal likelihood.  In (a), we plot the marginal distribution $p(x_N|x_0)$ estimated by weighted histograms of $12000$ samples generated using LM.  Also shown is the target distribution.  In (b), we plot the same information for DLM.}
  \label{BM_WHist}
\end{figure}

As expected, LM essentially ignores one of the two modes, while DLM captures both modes.
%
%
As explained before, even though both samplers should reproduce the target distribution in the large-sample-size limit, in practice LM produces almost no sample paths that go to the left bump.  In contrast, DLM readily generates sample paths ending at both bumps, leading to a more effective sampling of the target distribution.  We have experimented with increasing the sample size for LM, but even the largest sample sizes we consider did not lead to weighted samples that represent both modes.

Finally, note that empirical estimates of $Q$ are insufficient to detect this problem: even though the true value of $Q$ for LM should be quite large in this case, empirical estimates of $Q$ for LM are actually quite small because none of the sample paths go to the left bump.  Indeed, for Figure~\ref{BM_WHist}, the empirical $Q$ for LM is $\sim3\times10^{-3}$, while that of DLM is $\sim1$.  The example thus shows that for non-gaussian and possibly multimodal distributions, DLM can be more reliable despite the same scaling of $Q$.

\paragraph{Overdamped Langevin equation with bimodal likelihood.}
The scaling arguments for DLM and its symmetrized version rely on the assumption that 
the most likely path $\phi$ is unique at every time step.  We now consider an example for the DLM in which we deliberately violate this assumption. 
The model is
\begin{equation}
  \label{eq:LinearEx}
  X_{n+1} = X_n -\dt ~\alpha \cdot X_n + \sqrt{\dt}~\sqrt{\ep}~\xi_n~.
\end{equation}
This is the Euler discretization of the overdamped Langevin equation $\dot{X} = -\alpha X + \sqrt\ep~\dot{B}.$ We use the log-likelihood
\begin{displaymath}
g(x) = 10 \cdot \left(\frac{x^4}{4} - \frac{x^2}{2}\right).
\end{displaymath}
As in the previous example, the optimal path goes to the right bump when $X_0>0$ and to the left when $X_0<0$.  At $X_0=0$ there is no unique optimal path.

The linear drift makes it likely that DLM sample paths encounter the $x=0$ line
and the small noise results may not hold in this case.
To illustrate the behavior and efficiency of the methods in this situation
we perform experiments with varying values of $\ep$ and $x_0$.  Specifically, for a fixed $\ep$, we take $N=10^3$ time steps with DLM, starting from initial conditions ranging from $x_0=10^{-1}$ to $x_0=10^{-5}$.  
We compute the averaging number of $x=0$ crossings for each experiment.  Figure~\ref{Langevin_Cross} shows the results as well as the computed values of $Q$.
\begin{figure}[tb]
  \begin{center}
    \begin{tabular}{cc}
      \resizebox{3in}{!}{\includegraphics{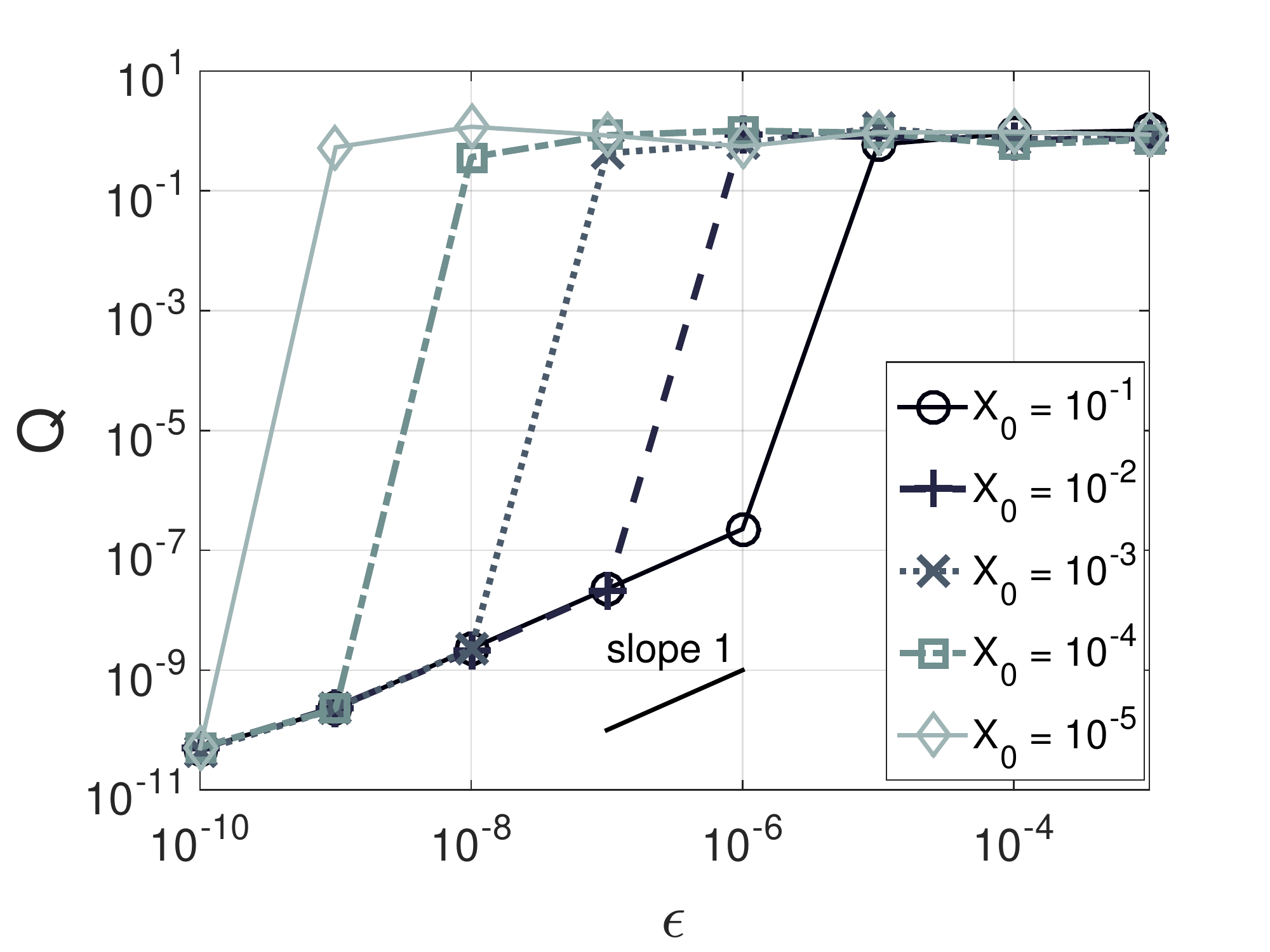}} &
      \resizebox{3in}{!}{\includegraphics{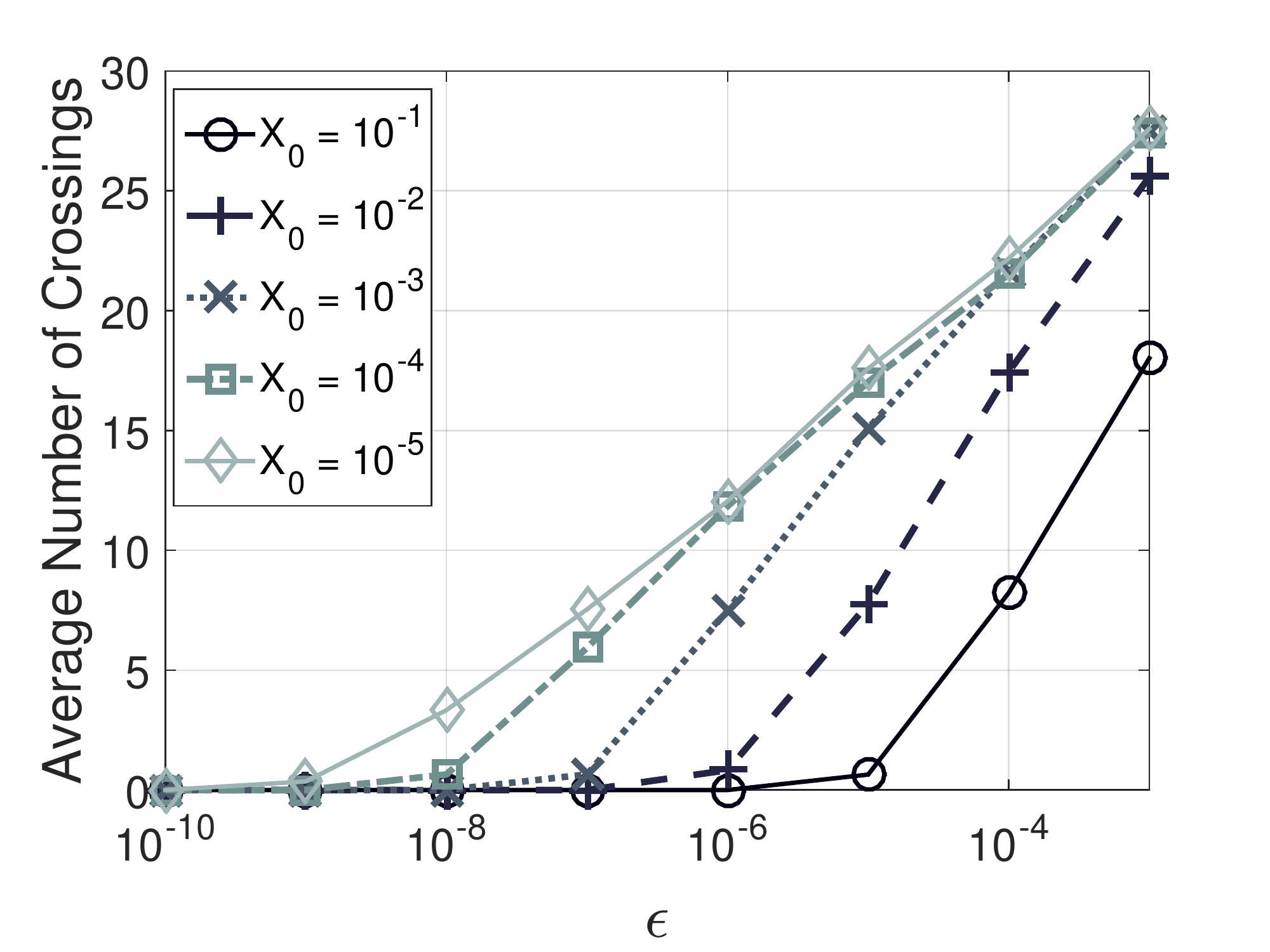}} \\
      (a) & (b)\\
    \end{tabular}
  \end{center}
  \caption{DLM applied to the overdamped Langevin equation with bimodal likelihood.  Panel (a) shows the scaling of $Q$ vs.~$\ep$ for $x_0$ approaching $x=0$.  In (b), we plot the average number of $x=0$ crossings against $\ep$.}
  \label{Langevin_Cross}
\end{figure}

As can be seen in Figure~\ref{Langevin_Cross}(a), the predicted asymptotic scaling of $Q$ only emerges for small $\ep$; the critical value of $\ep$ at which the $Q$ curve crosses over into the asymptotic regime decreases as $x_0$ approaches $0$, making crossings more likely.  Comparing Figures~\ref{Langevin_Cross}(a) and \ref{Langevin_Cross}(b), we see that the asymptotic regime corresponds to values of $\ep$ small enough that the average number of  crossings per sample is near zero.  Closer examination of the data suggests that this critical $\ep$ scales roughly linearly with distance of the initial condition $x_0$ to $x=0$.  The example thus suggests that the efficiency of DLM may suffer if one encounters non-unique optimal paths while constructing the proposal distribution $q$ sequentially, 
but the predicted $Q$ scaling again holds if $\ep$ is small enough.

Finally, we note that even in the pre-asymptotic regime, the value of $Q$ are $O(1)$, meaning the effective number of samples is $\approx N_e/2$, which is still a significant improvement over direct sampling.


\subsection{Example with a nonlinear SDE}
Our second example is a stochastic version of an idealized geomagnetic pole reversal model due to Gissinger~\cite{gissinger2012new}:
\begin{equation}
  \label{G12}
  \begin{array}{rcrrlrl}
    \dot{x}^1 &=& 0.119x^1 &-& x^2 x^3 &+& \sqrt\ep\dot{B}^1\\
    \dot{x}^2 &=& -0.1x^2 &+& x^1  x^3  &+& \sqrt\ep\dot{B}^2\\
    \dot{x}^3 &=& 0.9 - x^3 &+& x^1 x^2 &+& \sqrt\ep\dot{B}^3\\
  \end{array}~.
\end{equation}
(In this section, $x^k$ refers to the $k$th component of a vector $x$.)  The $\ep=0$ system of ordinary differential equations has 3 unstable fixed points: $(0,0,0.9)$ and $p_\pm\approx(\mp0.96,\pm1.05,-0.109)$.  It has a chaotic attractor on which trajectories circulate around either $p_+$ or $p_-$ many times before making a quick transition to the other fixed point.  See Figure~\ref{G12_Attractor}.  Following~\cite{gissinger2012new}, we refer to these transitions as ``pole reversals,'' since the second component $x^2(t)$ can be thought of as a proxy for the geomagnetic dipole field, and it changes signs at these transitions.

Here, we consider Eq.~(\ref{G12}) with $\ep>0$.  We start with an initial condition near $p_+$, and after $N=100$ steps make an observation with log-likelihood $g(x) = ||x-y||^2/2$, where $x=(x^1,x^2,x^3)$.  We view $y\in\R^3$ as the outcome of a ``measurement'' made at step $N$.

We consider two cases:
\begin{itemize}

\item[{\bf Case (a):}] The measured value $y$ is near $p_-$, i.e., on the opposite ``lobe'' from the initial condition;

\item[{\bf Case (b):}] $y$ is near $p_+$, i.e., on the same ``lobe'' as the initial condition.

\end{itemize}
Figure~\ref{G12_Attractor} illustrates the initial conditions, data, and optimal paths for the two cases.  Shown are trajectories of the deterministic model (light gray), representing the chaotic attractor.  The dashed line is the most likely path with initial condition marked by ``$\bullet$'' and with measured state at time $t=10$ marked by ``$+$''; this trajectory undergoes a ``pole reversal'' (Case (a)).  The solid blue line represents the most likely path with initial condition ``$\bigcirc$'' and observation ``$\times$,'' and does not exhibit a pole reversal (Case (b)).
\begin{figure}[tb]
  \begin{center}
    \resizebox{5in}{!}{\includegraphics{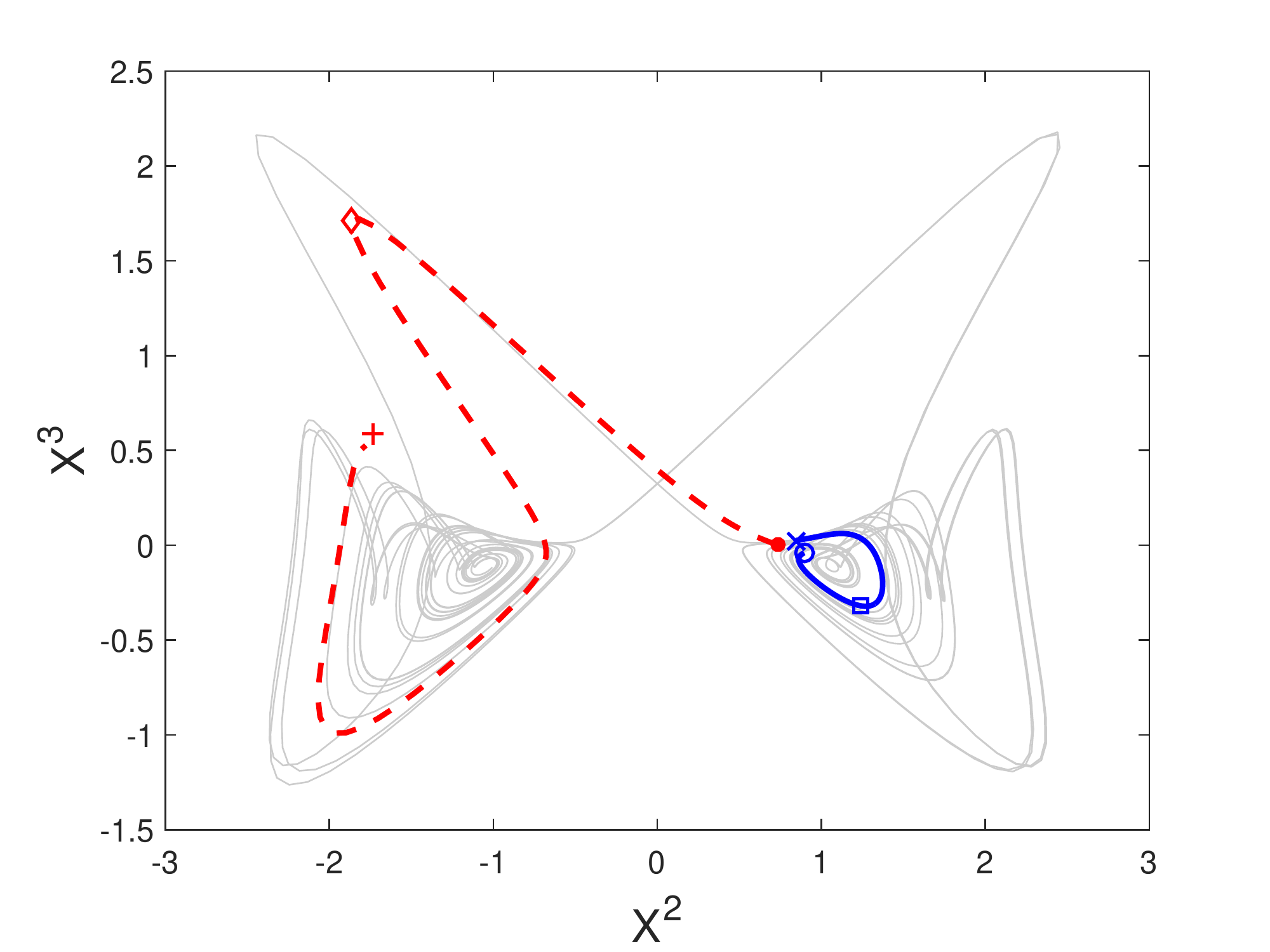}}
  \end{center}

  \caption{The Gissinger model and its phase space geometry.  Shown are trajectories of the deterministic model (light gray) projected to the $x^2$-$x^3$ plane.  The dashed line is the most likely path with initial condition marked by ``$\bullet$'' and measured state at time $t=10$ marked by ``$+$''; this trajectory undergoes a ``pole reversal'' (Case (a)).  The solid blue line represents the most likely path with initial condition ``$\bigcirc$'' and observation ``$\times$'' at $t=10$, and does not exhibit a pole reversal (Case (b)).  The symbols $\square$ and $\bigdiamond$ are the times at which we computed the histograms in Figure~\ref{sqplot_G12}.}

  \label{G12_Attractor}	
\end{figure}

To see how the two cases differ, we fix $\ep=10^{-2}$ and apply the LM and DLM to generate $1200$ sample paths in each case and plot marginals of the proposal distributions
at two different times.
In Case (a), we plot histograms of the marginal distributions at time $j \Delta t$ as marked by $\bigdiamond$ in Figure~\ref{G12_Attractor}; 
in Case (b), we plot histograms of the marginal distributions at time $j \Delta t$ as marked by $\square$.  
For each method, the resulting ``triangle plot'' consists of 
histograms of the one-dimensional marginals, $q(X^k_j|X_0)$ for $k\in\{1,2,3\}$,
and the two-dimensional marginals, $q(X^k_j,X^\ell_j|X_0),\; k\neq\ell$,
of the proposal distributions. 
The triangle plots are shown in Figure~\ref{sqplot_G12}.  In each panel, the diagonal plots are the one-dimensional marginal distributions.  The lower-triangular parts of each panel are the two-dimensional marginal distributions generated by LM, while the upper-triangular parts show marginals generated by DLM.
\begin{figure}[tb]
  \begin{center}
    \begin{tabular}{cc}
      \resizebox{3.2in}{!}{\includegraphics{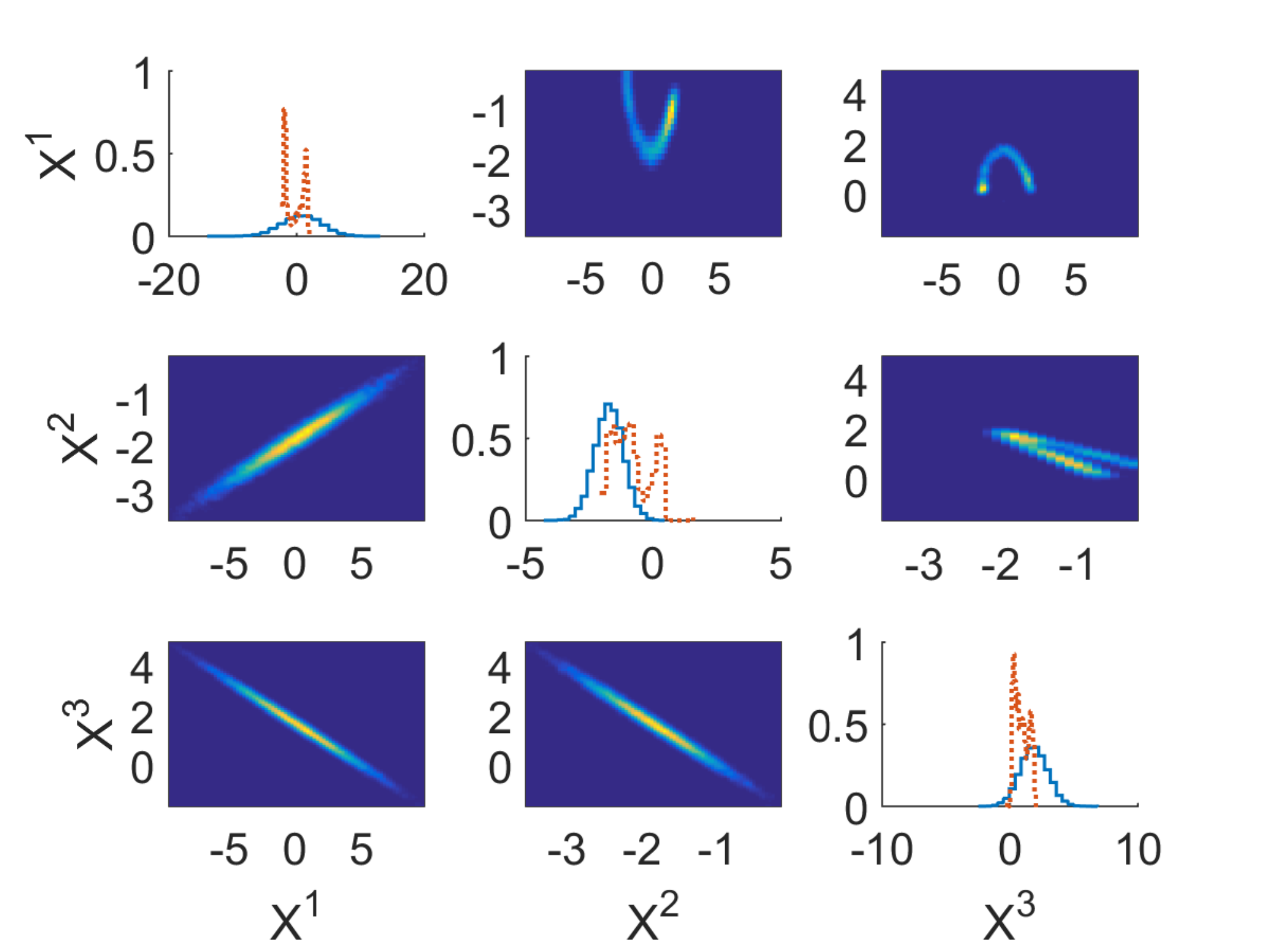}} &
      \resizebox{3.2in}{!}{\includegraphics{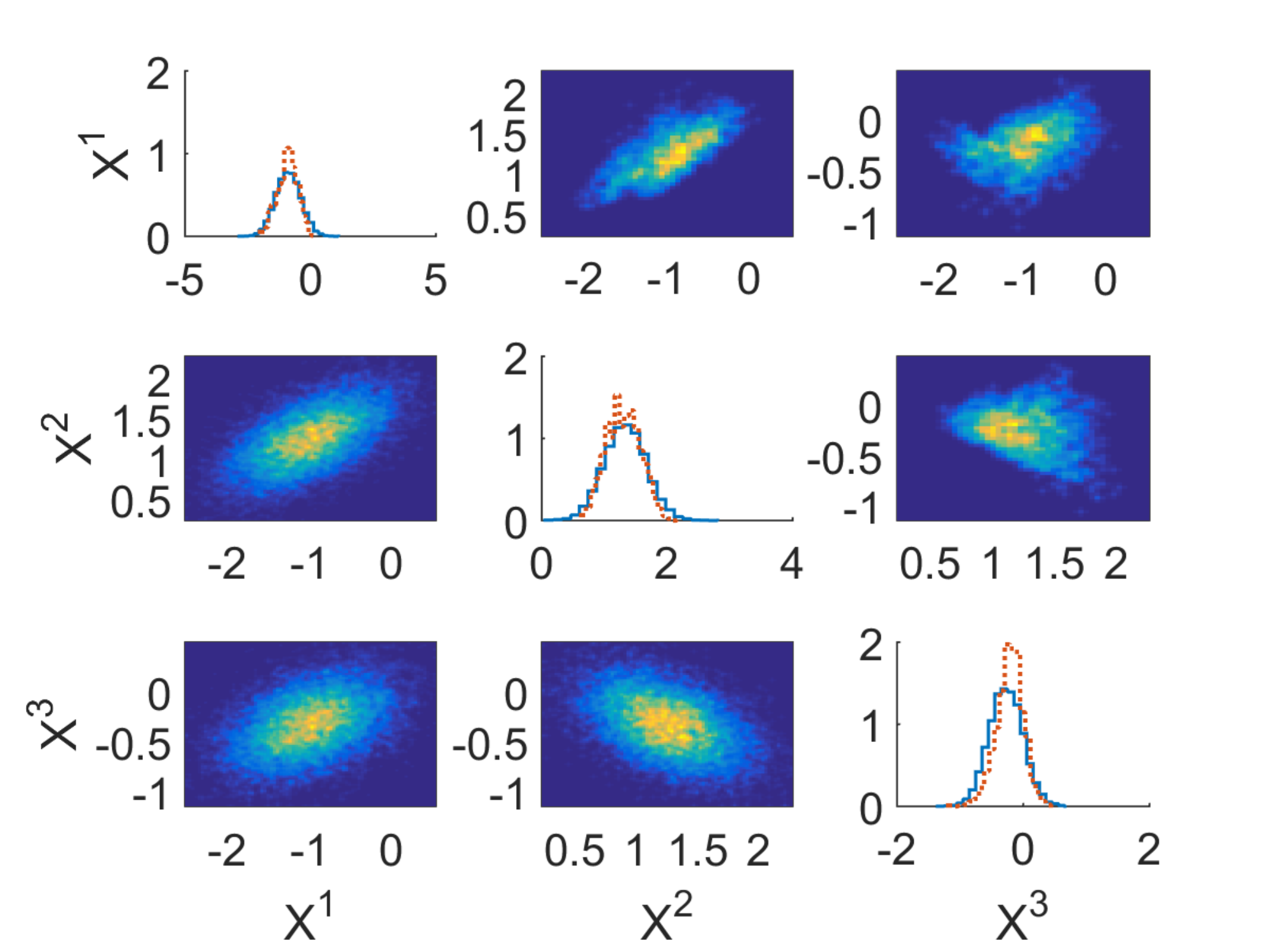}} \\
      Case (a) & Case (b) \\
    \end{tabular}
  \end{center}
  \caption{Final-time marginal distributions for the Gissinger model.  In each panel, the diagonal plots are histograms for the final-time marginal proposal distributions for of $x^1$, $x^2$, and $x^3$ (solid = LM, dashed = DLM).  The times at which the marginals are computed are marked by $\bigdiamond$ in Figure~\ref{G12_Attractor} for Case (a), and $\square$ for Case (b).  Plots on the lower-triangular submatrix are two-dimensional marginal proposal distributions computed by LM, while two-dimensional marginal proposal distributions computed by DLM form the upper-triangle
  (see text for details).}
  \label{sqplot_G12}	
\end{figure}

In Case (a), the marginal distributions of the DLM proposal are multimodal, possibly related to the underlying geometry of the strange attractor.
In contrast, the LM proposal distribution misses this complexity altogether (as one might expect).  
Moving now to Case (b), which involves starting and end points on the same lobe connected by a shorter optimal path, the marginals are unimodal, and LM and DLM give more similar answers (though there is still significant deviation from gaussianity in the DLM proposal distribution).

Finally, we vary $\ep$ in Cases (a) and (b) and apply LM, SLM, DLM and SDLM.  For each value of $\ep$, we estimate $Q$ for each of the 4 methods.  The results are shown in Figure~\ref{Q_G12_T10}.  Not surprisingly, LM breaks down for Case (a), in which the target distribution is likely multimodal.  In contrast, both DLM and SDLM exhibit the predicted scaling.  For Case (b), because the target distribution is unimodal, all four methods behave as predicted by the small noise theory.
\begin{figure}[tb]
  \begin{center}
    \begin{tabular}{cc}
      \resizebox{3in}{!}{\includegraphics{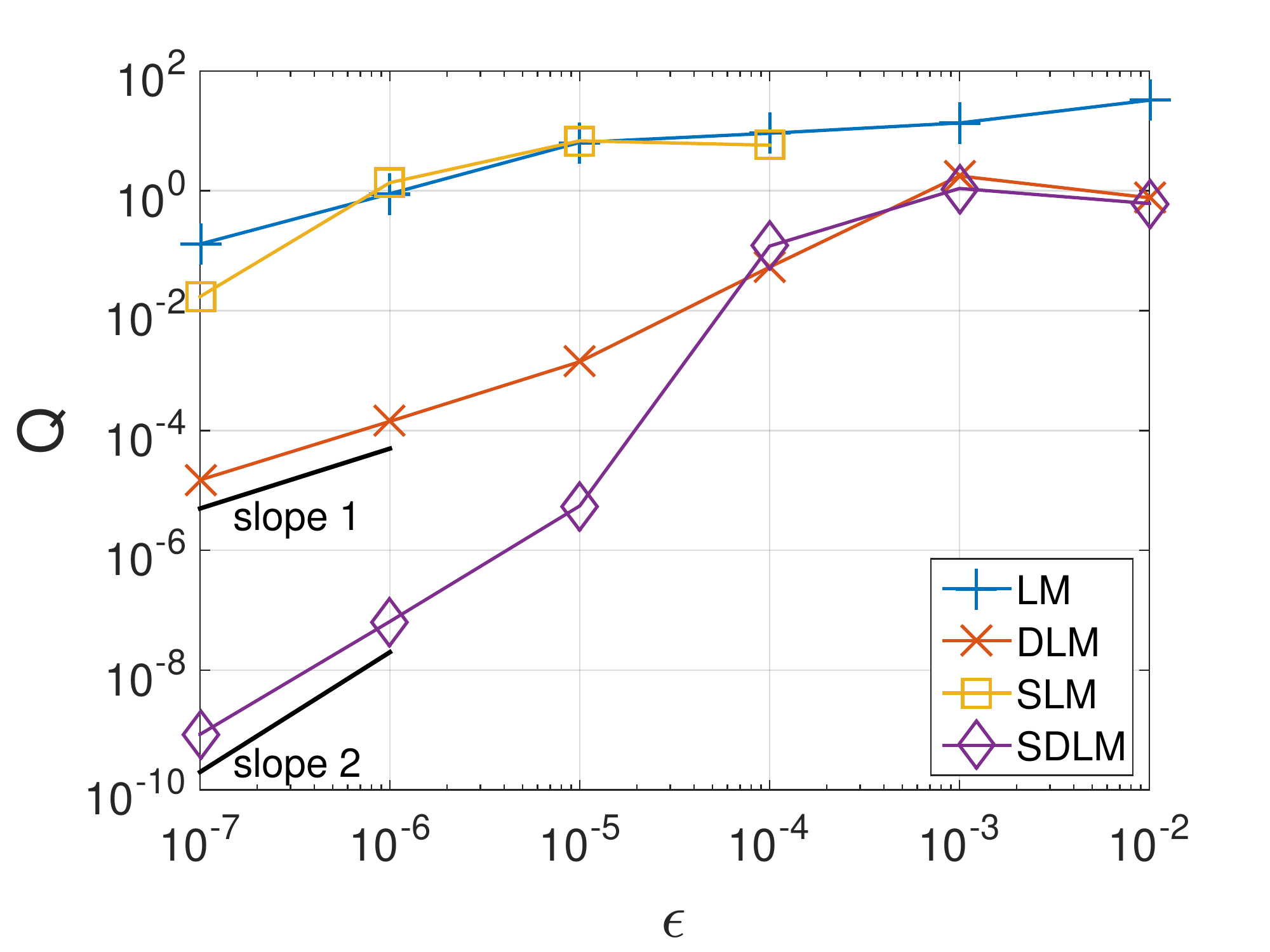}} &
      \resizebox{3in}{!}{\includegraphics{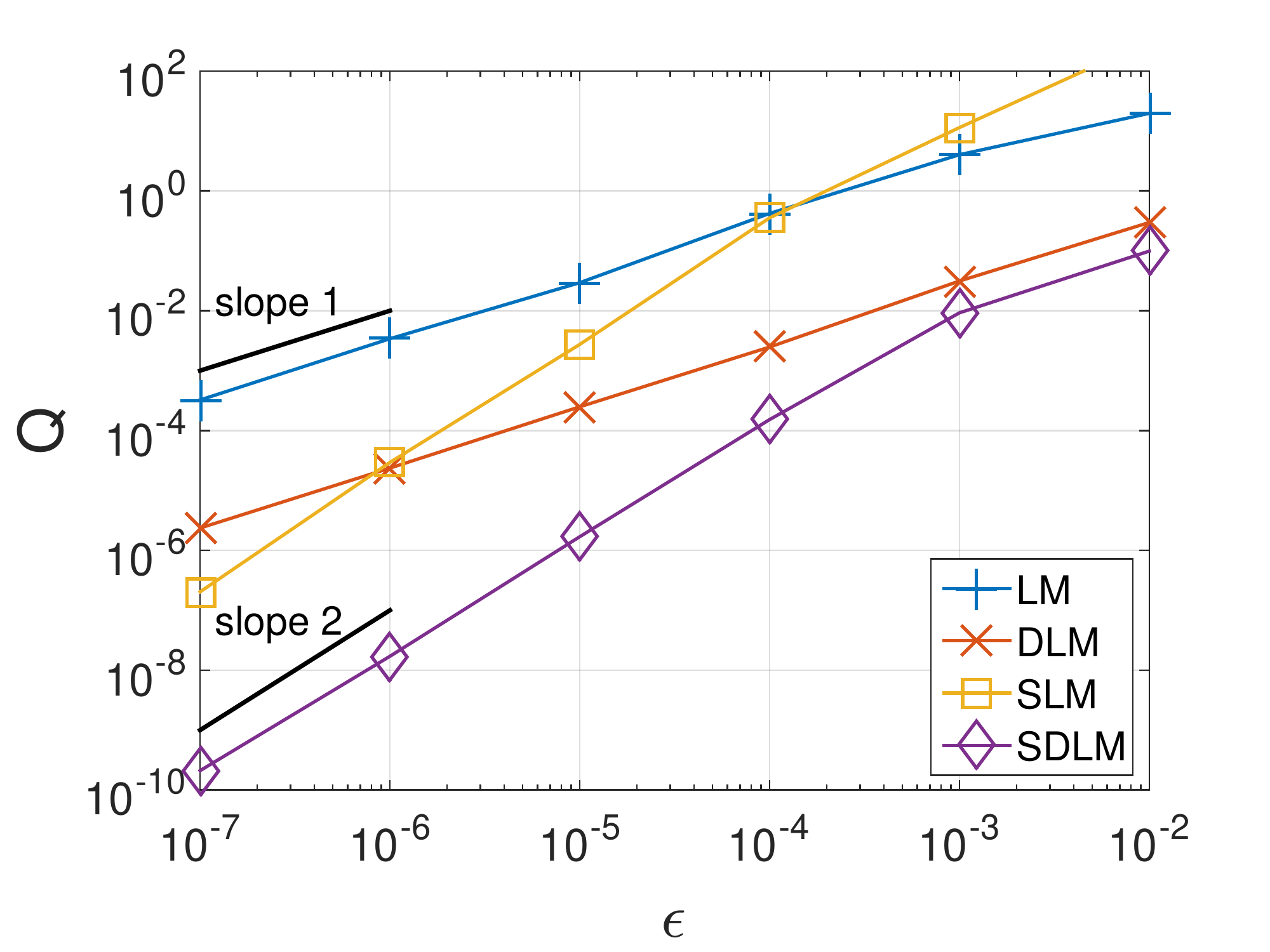}} \\
      Case (a) & Case (b) \\
    \end{tabular}
  \end{center}

  \caption{Relative variance $Q$ as a function of $\ep$ for the Gissinger model.  Case (a) involves a pole reversal, whereas Case (b) does not.}
  \label{Q_G12_T10}	
\end{figure}


\paragraph{Numerical details.}
The Gissinger model 
requires attention to numerical implementation 
when we compute its statistics.
We describe our numerical implementation in detail.
\begin{enumerate}

\item \emph{Time-stepping.} The Euler scheme for the Gissinger model requires small time steps because of numerical instabilities.  To improve stability, we discretize the drift part of Eq.~(\ref{G12}) using a standard 4th-order Runge-Kutta (RK4) method, then adding IID ${\mathcal N}(0,\sqrt{\ep}\sqrt{\dt}~I)$ normal random vectors at each step.  This yields a model of the form \eqref{SDE}, where $\tilde{f}(x,\dt)$ now represents one step of the RK4 scheme.  In all the examples shown above, the time step is $\dt = 10^{-1}$.

\item \emph{Estimation of $Q$.}  In Figure~\ref{Q_G12_T10}, because of their different variances, we use $1200$ sample paths to estimate $Q$ for DLM and for SDLM, and $12000$ paths for LM and for SLM.

\item \emph{Computing optimal paths.}  Our methods requires computing optimal paths.  For the Gissinger model, we use Newton's method.  Since explicit analytical expressions for the gradient and the Hessian are available, this is relatively straightforward to program.  To reduce the (fairly significant) computational cost of computing $\phi$ at each time step, we ``guess'' a good initialization for the optimization procedure using the solution from the previous time step using the linearized dynamics.  See \cite{leachthesis} for details.


\end{enumerate}


\section{Continuous-time limit of dynamic linear map}\label{section:continuous_time}

So far, we have focused on time discretizations of SDEs.  A natural question is what happens to the proposed algorithms in the limit $\dt\to0$.  In this section, we sketch some analytical arguments aimed at addressing these questions 
for scalar SDE. 
Though restrictive, we believe these results yield useful insights.  A more complete and rigorous analysis is left for future work, as it is expected to be more involved.


\subsection{Dynamic linear map}
For scalar SDE, the DLM can be defined through the recursion
\begin{equation}
	X^{\tdt}_{n+1} = \phi^{\tdt}_{n+1}(X^{\tdt}_{n},n) + \sqrt{\dt} ~ \sqrt{\ep}~ \sqrt{\Sigma^{\tdt}_{n+1}(X^{\tdt}_n,n)} ~\xi_n,
\end{equation}
where $\phi^{\tdt}_n(x_0,m)$, $n\in\{m,m+1,\cdots, N\}$, is the optimal path~(\ref{eq:optimal-path}) with prescribed initial condition $x_m=x_0\in\R$, $\Sigma^{\tdt}_{n+1}(X^{\tdt}_n,n)$ is the $(1,1)$th entry of the Hessian of $F^{\tdt}$ (see Eqs.~\eqref{eq:F} and \eqref{eq:DLMProposal}), and the $\xi_n$ are independent standard normal random variables.  Keeping in mind that $\phi_n(x,n)=x$ for all $n$, the above can be written as
\begin{equation}
	\label{DLMeq}
	X^{\tdt}_{n+1} = X^{\tdt}_{n} + \dt ~\frac{\phi^{\tdt}_{n+1}(X^{\tdt}_{n},n) - \phi^{\tdt}_{n}(X^{\tdt}_{n},n)}{\dt} + \sqrt{\dt} ~ \sqrt{\ep}~ \sqrt{\Sigma^{\tdt}_{n+1}(X^{\tdt}_n,n)} ~\xi_n~.
\end{equation}
%
%
Our goal in this subsection is to sketch an argument suggesting that as $\dt\to0$, solutions of \eqref{DLMeq} converge weakly~\cite{kloeden1992numerical} to those of
\begin{equation}
  \label{SDEtilde}
  d X_t = \dot{\phi}_t(X_t,t)~dt + \sqrt{\ep} ~ \sigma \cdot dB_t
\end{equation}
with $X_0=x_0$.  
Since we consider ``continuous time'' and ``discrete time'' cases, 
we mark the discrete time case by a $\Delta t$ superscript
(i.e., in this section, the function in Eq.~(\ref{eq:F}) is called $F^{\tdt}$).
In Eq.~(\ref{SDEtilde}), ``$\dot{\phi}_s(x,t)$'' denotes $\partial_s(\phi_s(x,t)),$ and the path $s\mapsto\phi_s(x_0,t)$ ($t\leq s\leq T$) minimizes the \emph{action functional}~\cite{freidlin1984random}
\begin{equation}
  \label{variation}
  F(x_{t:T}\vert x_t=x_0) = \frac{1}{2 \sigma^2} \int_t^T (\dot{x}_s-f(x_s))^2~ds + g(x_T)~,~~\phi_t(x_0,t)=x_0~.
\end{equation}
This is the continuous-time analog of Eq.~(\ref{eq:F}).  

Eq.~\eqref{SDEtilde} was derived in \cite{eijnden2012rare} as the proposal for an importance sampling algorithm.  This was later used in \cite{eijnden2013data} for data assimilation in the small-noise regime.  We assume minimizers $\phi$ of the action functional are twice-differentiable in the time parameter and satisfy the Euler-Lagrange equations; this can be justified via standard results from the calculus of variations (see, e.g., Section~3.1 of \cite{giaquinta-hildebrandt}).
%
%
In what follows, we also assume that the action functional has a single global minimum for all initial positions $x$ and initial time $t\in[0,T]$.  This \emph{unique optimal paths} assumption (the continuous-time analog of the unimodality of $p(x)$) implies that $\dot{\phi}_t(x,t)$ is defined everywhere.  Without unique optimal paths, any analysis will require more care; see, e.g., \cite{eijnden2012rare} and references therein for a discussion of these and related issues.  The assumption is natural for linear systems with unimodal likelihood functions $e^{-g/\ep}$, and may hold (approximately) in nonlinear systems when $T$ is small.


We now sketch our argument.  We begin by recalling that a numerical approximation of an SDE \emph{converges weakly with weak order $k$} if for all test functions $\psi\in C^{k+1}$ with at most polynomial growth,
\begin{equation}
  \Big|\E\big(\psi(X^{\tdt}_N)\big| X_0\big)-\E\big(\psi(X_T)\big| X_0\big)\Big| = O(\dt^k)
\end{equation}
as $\dt\to0$.  By standard results in the numerical analysis of SDEs, weak convergence is implied by ``weak consistency'' plus some mild polynomial growth conditions; see, e.g., Section~14.5 in \cite{kloeden1992numerical} for details.

In the present context, consistency means that the factors $\big(\phi^{\tdt}_{n+1}(x,n) - \phi^{\tdt}_{n}(x,n)\big)/\dt$ and $\Sigma^{\tdt}_{n+1}(x,n)$ in Eq.~(\ref{DLMeq}) approximate the corresponding factors in Eq.~(\ref{SDEtilde}) ($\dot{\phi}_t(x,n\dt)$ and $\sigma^2$, respectively).  These we now prove.

\begin{prop*}
  Under the unique optimal path assumption, we have
  \begin{itemize}
  \item[(a)]
    \begin{equation}
      \label{phicriteria}
      \frac{\phi^{\tdt}_{n+1}(x,n) - \phi^{\tdt}_{n}(x,n)}{\dt} = \dot{\phi}_{n \tdt}(x,n \dt) + \O(\dt)
    \end{equation}
    for all $n = 1,...,N$ and $x\in\R$, and
  \item[(b)]
    \begin{equation}
      \label{hcriteria}
      \Sigma^{\tdt}_{n+1}(x,n) = \sigma^2 + \O(\dt).
    \end{equation}
  \end{itemize}
\end{prop*}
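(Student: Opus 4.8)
The plan is to read (a) and (b) as \emph{consistency} statements and to obtain them from a quantitative comparison of the minimizer of the discrete action $F^{\tdt}$ with the minimizer of the continuous action functional~\eqref{variation}. First I would fix $t=n\dt$ and $x\in\R$ and let $\phi^c(\cdot)=\phi_\cdot(x,t):[t,T]\to\R$ be the continuous optimal path; under the standing unique-optimal-paths assumption $\phi^c$ is the unique minimizer, solves the Euler--Lagrange equation $\ddot\phi^c=f(\phi^c)\,f'(\phi^c)$ with $\phi^c(t)=x$ and transversality condition $\sigma^{-2}\big(\dot\phi^c(T)-f(\phi^c(T))\big)+g'(\phi^c(T))=0$, and, by bootstrapping that ODE with smooth $f,g$, is as smooth as needed. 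Writing $M=N-n$ and letting $v\in\R^M$ be the grid restriction $v_i=\phi^c(t+i\dt)$, and using that $\phi^{\tdt}_n(x,n)=x=\phi^c(t)$ exactly, I would reduce~\eqref{phicriteria} to the single estimate $\phi^{\tdt}_{n+1}(x,n)=v_1+\O(\dt^2)$, after which $v_1=\phi^c(t+\dt)=x+\dt\,\dot\phi^c(t)+\O(\dt^2)$ closes the argument.

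For (a) I would proceed via the first-order conditions: the discrete optimal path solves $\nabla F^{\tdt}=0$, and a Taylor expansion of the residual $R:=\nabla F^{\tdt}(v)$ --- using the continuous Euler--Lagrange equation at interior nodes and the transversality condition at the observed endpoint --- shows that $R$ has interior components of size $\O(\dt^2)$ and an endpoint component of size $\O(\dt)$ (the one-sided difference of the Euler scheme at $T$ costs one order). Since $F^{\tdt}$ has a nondegenerate minimum, a quantitative implicit-function / Newton argument then places $\phi^{\tdt}$ near $v$ with $\phi^{\tdt}-v\approx-(\nabla^2F^{\tdt})^{-1}R$, and I would extract the first component using the Hessian structure from (b): the first row of $(H^{\tdt})^{-1}$ is asymptotically $\sigma^2\dt\,(1,\dots,1)$, so pairing against $R$ gives $\big(\phi^{\tdt}-v\big)_1$ of order $\dt\big(M\,\O(\dt^2)+\O(\dt)\big)=\O(\dt^2)$ (recall $M=\O(\dt^{-1})$). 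Dividing by $\dt$ yields~\eqref{phicriteria}. An alternative route would be a discrete dynamic-programming identity expressing $\big(\phi^{\tdt}_{n+1}(x,n)-x\big)/\dt$ through the gradient of the discrete cost-to-go function, which converges to its continuous counterpart; but that still needs a convergence rate, so it is not obviously easier.

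For (b) I would compute the Hessian explicitly. Differentiating~\eqref{eq:F} twice in the scalar case gives $H=(\sigma^2\dt)^{-1}\big(A+\dt B\big)$, where $A$ is the tridiagonal second-difference matrix with diagonal $(2,\dots,2,1)$ and off-diagonal entries $-1$ --- the last diagonal entry is $1$ because $x_N$ has no successor node, the observation $g$ entering only through $B$ --- and $B$ collects the $\O(1)$ contributions of $f'$ (interior) and $g''$ (endpoint). The key algebraic fact is $A\,\mathbf{1}=e_1$ (the constant vector is discretely harmonic with zero flux at the free end, while the pinned node at the start produces the unit), equivalently that the first row of $A^{-1}$ is $(1,\dots,1)$. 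Then $\Sigma^{\tdt}_{n+1}=(H^{-1})_{1,1}/\dt=\sigma^2\big((A+\dt B)^{-1}\big)_{1,1}=\sigma^2\big((A^{-1})_{1,1}+\O(\dt)\big)=\sigma^2+\O(\dt)$, and I would check that the correction stays bounded as $M\to\infty$ because in $\mathbf{1}^{T}B\,\mathbf{1}$ the interior diagonal terms $2f'(x_{n+i})$ are exactly cancelled by the off-diagonal terms $-f'(x_{n+i})$ (each counted twice), leaving only the bounded boundary term $\sigma^2g''(x_N)$. Replacing $v$ by $\phi^{\tdt}$ as the evaluation point costs a further $\O(\dt^2)$ only, since the third derivatives of $F^{\tdt}$ are $\O(1)$.

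The hard part will be the uniform-in-$M$ bookkeeping hiding behind ``Taylor expansion of $R$'' and ``stays bounded'': because $M=(T-t)/\dt\to\infty$, the matrix $A$ mimics the singular operator $-\partial_s^2$ and the entries of $A^{-1}$ are as large as $\O(M)$, so crude operator-norm estimates are useless and one must exploit the special structure --- the constant first row of $A^{-1}$, the cancellation of the interior $f'$ terms, and the fact that although there are $\O(M)$ interior residual components each is only $\O(\dt^2)$, so that their sum is still $\O(\dt)$. Everything else --- the calculus of variations supplying a smooth $\phi^c$, the Taylor expansions, and the perturbation of an inverse matrix --- should be routine. I would also emphasize that the whole argument rests on the standing unique-optimal-paths hypothesis: near singular points of the value function neither $\phi^{\tdt}$ nor $H$ need behave this way, which is exactly the regime the numerical experiments are meant to probe.
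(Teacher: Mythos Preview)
Your proposal is correct and takes a genuinely different route from the paper's own proof.

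For part~(a), the paper does not use a Newton/residual argument at all. Instead it writes out the discrete first-order conditions for $F^{\tdt}$, rewrites both the continuous and discrete Euler--Lagrange boundary value problems as first-order systems (introducing $v=\dot\phi$ and $v_k=(\phi^{\tdt}_{k+1}-\phi^{\tdt}_k)/\dt$), recognizes the discrete system as a first-order-accurate finite difference scheme for the continuous two-point BVP, and then invokes off-the-shelf convergence theory for such schemes (Keller) to get $v_n=\dot\phi(n\dt)+\O(\dt)$ directly. For part~(b), the paper does not manipulate the tridiagonal matrix explicitly; it identifies $H/\dt$ as a discretization of the Sturm--Liouville operator $L$ arising from the second variation of $F$, writes the continuous Green's function $K(t,s)$ of $L$ in terms of solutions $y_1,y_2$ satisfying the two boundary conditions, argues $(H^{-1})_{1,1}=K(\dt,\dt)+\O(\dt^2)$, and Taylor expands $K$ near the origin to obtain $\sigma^2\dt+\O(\dt^2)$.

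Your approach is more elementary and self-contained: the decomposition $H=(\sigma^2\dt)^{-1}(A+\dt B)$ with $A\mathbf{1}=e_1$ replaces the Green's function machinery, and the residual estimate replaces the black-box BVP convergence result. This buys you an explicit first-order correction (your telescoping calculation exhibits $\sigma^2 g''(x_N)$, which the paper's expansion hides inside $y_2'(0)/y_2(0)$) and avoids external references. The price is the uniform-in-$M$ bookkeeping you already flagged---in particular, your Newton step for~(a) tacitly needs $\|\phi^{\tdt}-v\|_\infty=\O(\dt)$ to control the quadratic remainder before you can extract the sharper $\O(\dt^2)$ bound on the first component, so a short bootstrap is required; the paper sidesteps this by citing Keller. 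Both arguments rest equally on the unique-optimal-paths hypothesis.
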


%
%
%
\begin{proof}[Proof of (a)]
  We begin by proving that $\phi$ and $\phi^{\tdt}$ satisfy the first variational equations for $F$ and $F^{\tdt}$, respectively (see Eqs.~\eqref{variation} and \eqref{eq:F}).  Without loss of generality, set $t=0$ and $n=0$, and write $\phi(s):=\phi_s(x_0,0)$ for a given $x_0$.  Then the first variational equation of $F$ is the boundary value problem
  \begin{align}
    \label{eq:ContVar1}
    -\ddot{\phi}(s) + f'(\phi(s))f(\phi(s)) &=0 \\[1ex]
    \phi(0) - x(0)&= 0 \\[1ex]
    \dot{\phi}(T) - f(\phi(T)) + \sigma ~ g'(\phi(T)) &= 0
  \end{align}
  and the first variational equation for $F^{\tdt}$ is
  \begin{align}
    \label{eq:DiscVar1}
    -\frac{\phi^{\tdt}_{k-1} - 2 \phi^{\tdt}_{k} + \phi^{\tdt}_{k+1}}{\dt^2} + f'(\phi^{\tdt}_{k})f(\phi^{\tdt}_{k}) +
    \frac{f(\phi^{\tdt}_{k})-f(\phi^{\tdt}_{k-1})}{\dt} - f'(\phi^{\tdt}_{k})\frac{\phi^{\tdt}_{k+1}-\phi^{\tdt}_{k}}{\dt} &= 0 \\[1ex]
    \phi^{\tdt}_{0} - x_0 &= 0 \\[1ex]
    \frac{\phi^{\tdt}_{N}-\phi^{\tdt}_{N-1}}{\dt} - f(\phi^{\tdt}_{N-1}) + \sigma ~ g'(\phi^{\tdt}_{N}) &= 0 
  \end{align}
%
%
  By the unique optimal path assumption, Eq.~(\ref{eq:ContVar1}) is well-posed.  Eq.~(\ref{eq:ContVar1}) is equivalent to the system
  \begin{equation}
    -\dot{v} + f'(\phi)f(\phi) = 0\qquad\mbox{and}\qquad\dot{\phi} = v
  \end{equation}
  with boundary conditions $\phi(0)=0$ and $v(T)-f(\phi(T))+\sigma~g'(\phi(T))=0$, and Eq.~(\ref{eq:DiscVar1}) is equivalent to the first-order-accurate finite difference approximation
  \begin{equation}
    -\frac{v_{k}-v_{k-1}}{\dt} + f'(\phi^{\tdt}_{k})f(\phi^{\tdt}_{k}) +
    \frac{f(\phi^{\tdt}_{k})-f(\phi^{\tdt}_{k-1})}{\dt} - f'(\phi^{\tdt}_{k})~v_{k} = 0\qquad\mbox{and}\qquad
    v_k = \frac{\phi^{\tdt}_{k+1}-\phi^{\tdt}_k}{\dt}
  \end{equation}
  Convergence results for numerical approximations of two-point boundary value problems tell us that for first-order accurate finite difference schemes, point-wise errors are uniformly bounded by $C\dt$ for some $C>0$ (see, e.g., \cite{keller_2ptbvp} and references therein).  In particular, we have $(\phi^{\tdt}_{n+1}-\phi^{\tdt}_n)/\dt=v_n=\dot{\phi}(n\dt) + O(\dt)$ for each $n$, as claimed.
\end{proof}

\begin{proof}[Proof of (b)]
  To prove \eqref{hcriteria}, we consider the second variational equations of $F$ and $F^{\tdt}$.  For $F$, we obtain a Sturm-Liouville boundary value problem
  \begin{align*}
    (Lu)(s)& =0 \\
    u(0) &= 0 \\
    u'(T) + (-f'(\phi(s)) + \sigma ~ g''(\phi(T)))u(T) &= 0
  \end{align*}
  where the operator $L$ is defined by
  \begin{align*}
    Lu = -u''(s) + (f'(\phi(s))^2 + f''(\phi(s))f(\phi(s)))u(s),
  \end{align*}
  $\phi$ is the solution to the first variational equation, and $u$ is a test function.  The second variational equation for $F^{\tdt}$ is
  \begin{align*}
    (H/\dt) u^{\tdt}&=0\\
    u^{\tdt}_{0} &= 0\\
    \frac{u^{\tdt}_{N}-u^{\tdt}_{N-1}}{\dt} -f'(\phi^{\tdt}_{N-1})u^{\tdt}_{N-1} + \sigma ~ g''(\phi^{\tdt}_{N})u^{\tdt}_{N} &= 0
  \end{align*}
  where $H$ is the Hessian of $F^{\tdt}$, and
  \begin{align*}
    (H/\dt) u^{\tdt}=&	 -\frac{u^{\tdt}_{k-1} - 2u^{\tdt}_k + u^{\tdt}_{k+1}}{\dt^2} + (f'(\phi^{\tdt}_{k})^2 +  f(\phi^{\tdt}_{k})\cdot f''(\phi^{\tdt}_{k}))u^{\tdt}_{k} \\
    &+ \frac{f'(\phi^{\tdt}_{k}) u^{\tdt}_{k}-f'(\phi^{\tdt}_{k-1}) u^{\tdt}_{k-1}}{\dt} -\frac{u^{\tdt}_{k+1}-u^{\tdt}_{k}}{\dt}\cdot f'(\phi^{\tdt}_{k}) - \frac{\phi^{\tdt}_{k+1}-\phi^{\tdt}_{k}}{\dt} \cdot f''(\phi^{\tdt}_{k}))u^{\tdt}_{k}.
  \end{align*}
  Note that the discrete equations 
  can also be obtained by applying a first order discretization scheme
  to the continuous equations.

  The differential operator $L$ has an associated Green's function
  \begin{align*}
    K(t,s) = \frac{1}{y'_1(0)y_2(0)}
    \begin{cases}
      y_1(t)y_2(s) & 0<t<s \\
      y_2(t)y_1(s) & 0<s \leq t
    \end{cases}
  \end{align*}
  where $y_1$ is a solution that satisfies the left Dirichlet boundary condition, 
  while the solution $y_2$ satisfies the mixed boundary condition on the right.
  The analog of the Green's function for the discretized problem is $H^{-1}$.
  Specifically, the first element of the first row of $H^{-1}$ 
  is a second order approximation of the Green's function $K(\dt,\dt)$:
  \begin{equation}
    (H^{-1})_{1,1} = K(\dt,\dt) + \O(\dt^2) ~.
  \end{equation}
  A Taylor expansion of $K$ at the origin gives
  \begin{equation}
    K(\dt,\dt) =  \sigma^2\dt + \dt^2\frac{y'_2(0)}{y_2(0)} + \O(\dt^3)~,
  \end{equation}
  Combined, we thus have
  \begin{equation}
    (H^{-1})_{1,1} = \sigma^2 \dt + \O(\dt^2)
  \end{equation}
  Since $\Sigma^{\tdt}_{n+1}(x,n) = (H^{-1})_{1,1}/\dt$, this shows that $\Sigma^{\tdt}_{n+1}(x,n)= \sigma^2 + \O(\dt)$.
\end{proof}

\subsection{Small noise analysis for the continuous-time limit of DLM}
We investigate how the efficiency of the dynamic linear map, as measured by the quantity $Q$ (see Eq.~\eqref{eq:Q}), is affected by taking the $\dt\to 0$ limit, and apply the theory presented in \cite{spiliopoulos2015nonasymptotic} to show that $Q$ scales linearly in the small noise parameter~$\ep$ even as $\dt\to 0$.

First, we note that the weights of the continuous limit of the 
DLM follow from the Cameron-Martin-Girsanov Theorem
\cite{freidlin1984random}
\begin{equation}
	w(X) \propto \exp \bigg( -\frac{1}{\sqrt{\ep}}\int_0^T v(X_s,s) \cdot dB_s -\frac{1}{2\ep} \int_0^T v(X_s,s)^2 ds -\frac{1}{\ep}g(X_T) \bigg)
\end{equation}
where $v(x,t) = \sigma^{-1} \cdot (\phi'_t(x,t)-f(x))$.
The relative variance of the weights can be written as
\begin{equation}
	\label{Qpre}
	Q = e^{-(V(0,x_0)-2G(0,x_0))/\ep}-1,
\end{equation}
where
\begin{align*}
	G(x,t) &= -\ep \log(\E_q[w|x_t = x])	\\
	V(x,t) &= - \ep \log(\E_q[(w)^2|x_t = x])
\end{align*}
In \cite{spiliopoulos2015nonasymptotic},
it was shown that $V$ can be expanded in powers of $\ep$
when the minimizer $\phi$ of \eqref{variation} is unique for all $(x,t)$ in the domain.
A calculation shows that $G$ can also be expanded in powers of $\ep$,
with similar coefficients.
In summary, we have
\begin{align}
\label{eq:G}
	G(x,t) &=  G_0(x,t) + \ep \cdot G_1(x,t) + \ep^2 \cdot G_2(x,t) + \O(\ep^3) \\ 
\label{eq:V}
	V(x,t) &=  V_0(x,t) + \ep \cdot V_1(x,t) + \ep^2 \cdot V_2(x,t) + \O(\ep^3)
\end{align}
where the coefficients $G_i, V_i$, $i=0,1,2$, satisfy the following system of PDEs:
\begin{align*}
\partial_tG_0+f\partial_x G_0-\frac{\sigma^2}{2}(\partial_x G_0)^2&=0,\quad G_0(x,T) = g(x)\\
\partial_tV_0+(f+\sigma^2\partial_xG_0) \cdot \partial_xV_0-
\frac{\sigma^2}{2} (\partial_xV_0)^2-\sigma^2 (\partial_xG_0)^2 &=0,\quad V_0(x,T) = 2g(x)\\
\partial_tG_1+f \cdot \partial_xG_1+\frac{\sigma^2}{2}\partial_{xx}G_0-\sigma^2\partial_xG_0 \cdot \partial_xG_1&=0,\quad G_1(x,T)=0\\
\partial_tV_1+(f+\sigma^2\partial_xG_0) \cdot \partial_xV_1+\frac{\sigma^2}{2}\partial_{xx}V_0-\sigma^2\partial_xV_0 \cdot \partial_xV_1&=0,
\quad V_1(x,T)=0\\
\partial_tG_2+f \cdot \partial_xG_2+\frac{\sigma^2}{2}\partial_{xx}G_1-\sigma^2\partial_xG_0 \cdot \partial_xG_2 -\frac{\sigma^2}{2} (\partial_xG_1)^2 &=0,\quad G_2(x,T)=0\\
\partial_tV_2+(f+\sigma^2\partial_xG_0) \cdot \partial_xV_2+\frac{\sigma^2}{2}\partial_{xx}V_1-\sigma^2\partial_xV_0 \cdot \partial_xV_2 -\frac{\sigma^2}{2} (\partial_xV_1)^2 &=0,
\quad V_2(x,T)=0.
\end{align*}
(These equations are similar in structure to those of the WKB approximation~\cite{bender-orszag}, with the leading order term given by a nonlinear PDE of Hamilton-Jacobi type and a hierarchy of linear transport equations for the higher-order terms.)  One can check that $V_0=2G_0$ and $V_1=2G_1$, but $V_2\neq 2G_2$.  Combining the expansions \eqref{eq:G} and \eqref{eq:V} we thus have
\begin{equation}
\label{eq:DiffTerm}
	V(x_0,0)-2G(x_0,0) = \ep^2 K_2 + 	O(\ep^3),
\end{equation}
where $K_2 = V_2-2G_2$ satisfies
\begin{align}
	\label{K2PDE}
	\partial_t K_2 + f \cdot \partial_x K_2 - \sigma^2 \partial_x G_0 \cdot \partial_x K_2 - \sigma^2 (\partial_x G_1)^2 &= 0,
	\quad K_2(x,T) = 0.
\end{align}	
Using \eqref{eq:DiffTerm}  in 
the expression of the relative variance $Q$ in \eqref{BM_WHist}, 
and expanding in $\ep$ results in
\begin{equation}
	Q = \ep \cdot K_2(x_0,0) + \O(\ep^2).
\end{equation}
Thus, the performance criterion $Q$ for this continuous time method
scales linearly with $\ep$.

\section{Concluding discussion}
\label{sec:Conclusions}

In this paper, we study a class of importance samplers for SDEs designed for data assimilation tasks in the small (observation and dynamic) noise regime.  We have extended a small noise analysis for implicit samplers~\cite{goodman2015small} to importance sampling for SDEs.  We have also shown that a symmetrization procedure, originally proposed in~\cite{goodman2015small}, can be applied effectively to obtain higher-order samplers for SDEs.  Moreover, we have shown that a dynamic version of the importance sampler retains the same asymptotic performance but is more robust in problems with multimodal distributions.

\medskip

Our work also points to a number of directions for future research:
\begin{enumerate}

\item\emph{Multimodal distributions.}  Our analysis is limited to unimodal target distributions, but multimodal distributions do occur in practice.  We believe an analysis for such problems (which necessarily means dealing with $q(x_{n+1}|x_n)$ with jump discontinuities), possibly on concrete examples, would yield useful insights into the performance of DLM in more general situations than the ones examined here.
%
%
One use for such an analysis is to compare DLM with other data assimilation methods, e.g., the ensemble Kalman filter, which may require less computation in nearly gaussian problems.


\item\emph{Continuous time limits.} 
In discrete time, the dimension of the sampling problem we consider is equal to the dimension of a discretized path of an SDE and, thus, equal to the product of the state dimension and the number of time steps of the path.  
Our continuous time limit of the DLM for scalar SDE indicate that 
a large dimension due to a small time step is unproblematic,
but our results do not indicate how the efficiency of DLM degrades when the dimension of the SDE is large.

\item\emph{Symmetrization in continuous time.}  
Our results with symmetrized methods in discrete time are encouraging, but we currently do not have theoretical results on symmetrization in continuous time.

\item\emph{Long timescales.} As mentioned in the Introduction, the methods discussed in this paper bear a close resemblance to methods proposed in \cite{eijnden2012rare} and \cite{dupuis2007subsolutions} for rare event simulation.  However, in this paper we have assumed a fixed final time $T$, whereas for many (if not most) rare event problems of interest, the relevant timescale tends to $\infty$ as $\ep\to0$ (e.g., $T=O(1/\ep)$), and our methods are not expected to perform well on such long time scales.  It would be of theoretical and practical interest to extend the ideas described here to the setting of rare event simulation, particularly the idea of symmetrization.

\item\emph{Problems that do not come from SDEs.} Also mentioned in the Introduction is the possibility of extending the methods proposed here, in particular symmetrization, to more general sequential Monte Carlo sampling problems.

\end{enumerate}

\section{Acknowledgments}

KL and AL were supported in part by NSF grant DMS-1418775.  MM was supported by NSF grant DMS-1619630, the Office of Naval Research (grant number N00173-17-2-C003), and by the Alfred P.~Sloan Foundation.  The authors thank Profs.~Jonathan Goodman, Jonathan Weare, and Kostas Spiliopoulos for many helpful conversations and some of the references.

\bibliographystyle{siam}
\bibliography{Symmetrizing_Samplers}

\begin{thebibliography}{10}

\bibitem{agapiou2015importance}
{\sc S.~Agapiou, O.~Papaspiliopoulos, D.~Sanz-Alonso, and A.~Stuart}, {\em
  Importance sampling: computational complexity and intrinsic dimension},
  Statistical Science, 32 (2017), pp.~405--431.

\bibitem{asmussen_stochastic_2007}
{\sc S.~Asmussen and P.~W. Glynn}, {\em Stochastic {Simulation}: {Algorithms}
  and {Analysis}}, Springer Science \& Business Media, July 2007.

\bibitem{bender-orszag}
{\sc C.~M. Bender and S.~A. Orszag}, {\em Advanced {M}athematical {M}ethods for
  {S}cientists and {E}ngineers. {I}}, Springer-Verlag, New York, 1999.
\newblock Asymptotic methods and perturbation theory, Reprint of the 1978
  original.

\bibitem{Bergmman99}
{\sc N.~Bergman}, {\em Recursive Bayesian {E}stimation: Navigation and
  {T}racking {A}pplications}, Ph.D Dissertation, Linkoping University,
  Linkoping, Sweden, 1999.

\bibitem{Bocquet2010}
{\sc M.~Bocquet, C.~Pires, and L.~Wu}, {\em Beyond {G}aussian statistical
  modeling in geophysical data assimilation}, Monthly Weather Review, 138
  (2010), pp.~2997--3023.

\bibitem{chorin2010implicit}
{\sc A.~Chorin, M.~Morzfeld, and X.~Tu}, {\em Implicit particle filters for
  data assimilation}, Communications in Applied Mathematics and Computational
  Science, 5 (2010), pp.~221--240.

\bibitem{chorin2013stochastic}
{\sc A.~J. Chorin and O.~H. Hald}, {\em Stochastic {T}ools in {M}athematics and
  {S}cience}, vol.~58, Springer, 2013.

\bibitem{doucet2001sequential}
{\sc A.~Doucet, N.~De~Freitas, and N.~Gordon}, {\em Sequential {M}onte {C}arlo
  {M}ethods in {P}ractice}, {S}tatistics for {E}ngineering and {I}nformation
  {S}cience, Springer New York, 2001.

\bibitem{dupuis2011rare}
{\sc P.~Dupuis, K.~Spiliopoulos, and H.~Wang}, {\em Rare event simulation for
  rough energy landscapes}, in Proceedings of the 2011 Winter Simulation
  Conference (WSC), IEEE, 2011, pp.~504--515.

\bibitem{dupuis2012importance}
\leavevmode\vrule height 2pt depth -1.6pt width 23pt, {\em Importance sampling
  for multiscale diffusions}, Multiscale Modeling \& Simulation, 10 (2012),
  pp.~1--27.

\bibitem{dupuis2015escaping}
{\sc P.~Dupuis, K.~Spiliopoulos, X.~Zhou, et~al.}, {\em Escaping from an
  attractor: Importance sampling and rest points {I}}, The Annals of Applied
  Probability, 25 (2015), pp.~2909--2958.

\bibitem{dupuis2004importance}
{\sc P.~Dupuis and H.~Wang}, {\em Importance sampling, large deviations, and
  differential games}, Stochastics: An International Journal of Probability and
  Stochastic Processes, 76 (2004), pp.~481--508.

\bibitem{dupuis2007subsolutions}
\leavevmode\vrule height 2pt depth -1.6pt width 23pt, {\em Subsolutions of an
  {I}saacs equation and efficient schemes for importance sampling}, Mathematics
  of Operations Research, 32 (2007), pp.~723--757.

\bibitem{freidlin1984random}
{\sc M.~I. Freidlin and A.~D. Wentzell}, {\em Random {P}erturbations of
  {D}ynamical {S}ystems}, Springer, 1984.

\bibitem{giaquinta-hildebrandt}
{\sc M.~Giaquinta and S.~Hildebrandt}, {\em Calculus of {V}ariations. {I}},
  vol.~310 of Grundlehren der Mathematischen Wissenschaften [Fundamental
  Principles of Mathematical Sciences], Springer-Verlag, Berlin, 1996.
\newblock The Lagrangian formalism.

\bibitem{gissinger2012new}
{\sc C.~Gissinger}, {\em A new deterministic model for chaotic reversals}, The
  European Physical Journal B, 85 (2012), pp.~1--12.

\bibitem{goodman2015small}
{\sc J.~Goodman, K.~K. Lin, and M.~Morzfeld}, {\em Small-noise analysis and
  symmetrization of implicit monte carlo samplers}, Communications on Pure and
  Applied Mathematics, 69 (2016), pp.~1924--1951.

\bibitem{keller_2ptbvp}
{\sc H.~B. Keller}, {\em Numerical {M}ethods for {T}wo-{P}oint {B}oundary
  {V}alue {P}roblems}, Dover Publications, Inc., New York, 1992.
\newblock Corrected reprint of the 1968 edition.

\bibitem{kloeden1992numerical}
{\sc P.~Kloeden and E.~Platen}, {\em {N}umerical {S}olution of {S}tochastic
  {D}ifferential {E}quations}, Applications of Mathematics, Springer-Verlag,
  1992.

\bibitem{leachthesis}
{\sc A.~Leach}, {\em Monte {C}arlo Methods for Stochastic Differential
  Equations and their Applications}, PhD thesis, University of Arizona, 2017.

\bibitem{LiuChen98}
{\sc J.~S. Liu and R.~Chen}, {\em Sequential {M}onte {C}arlo methods for
  dynamical systems}, J. Amer. Statist. Assoc., 93 (1998), pp.~1032--1044.

\bibitem{MEL17}
{\sc L.~Martino, V.~Elvira, and F.~Louzada}, {\em Effective sample size for
  importance sampling based on the discrepancy measures"}, Signal Processing,
  131 (2017), pp.~386--401.

\bibitem{morzfeld2012random}
{\sc M.~Morzfeld, X.~Tu, E.~Atkins, and A.~J. Chorin}, {\em A random map
  implementation of implicit filters}, Journal of Computational Physics, 231
  (2012), pp.~2049--2066.

\bibitem{ostrovskii_exact_2003}
{\sc E.~I. Ostrovskii}, {\em Exact {Asymptotics} of {Laplace} {Integrals} for
  {Nonsmooth} {Functions}}, Mathematical Notes, 73 (2003), pp.~838--842.

\bibitem{SBM15}
{\sc C.~Snyder, T.~Bengtsson, and M.~Morzfeld}, {\em Performance bounds for
  particle filters using the optimal proposal}, Monthly Weather Review, 143
  (2015), pp.~4750--4761.

\bibitem{spiliopoulos2015nonasymptotic}
{\sc K.~Spiliopoulos}, {\em Nonasymptotic performance analysis of importance
  sampling schemes for small noise diffusions}, Journal of Applied Probability,
  52 (2015), pp.~797--810.

\bibitem{vanLeeuwen2009}
{\sc P.~van Leeuwen}, {\em Particle filtering in geophysical systems}, Monthly
  Weather Review, 137 (2009), pp.~4089--4144.

\bibitem{eijnden2012rare}
{\sc E.~Vanden-Eijnden and J.~Weare}, {\em Rare event simulation of small noise
  diffusions}, Communications on Pure and Applied Mathematics, 65 (2012),
  pp.~1770--1803.

\bibitem{eijnden2013data}
\leavevmode\vrule height 2pt depth -1.6pt width 23pt, {\em Data assimilation in
  the low noise regime with application to the kuroshio}, Monthly Weather
  Review, 141 (2013), pp.~1822--1841.

\end{thebibliography}

\end{document}